\newtheorem{thm}{Theorem}[section]
\newtheorem*{thm*}{Theorem}
\newenvironment{customthm}[1]
  {\innercustomthm}
  {\endinnercustomthm}
\newtheorem{prop}[thm]{Proposition}
\newtheorem{lem}[thm]{Lemma}
\theoremstyle{definition}
\newtheorem{defn}[thm]{Definition}
\newtheorem{rem}[thm]{Remark}
\newcommand{\N}{\mathbf{N}}
\newcommand{\Z}{\mathbf{Z}}
\newcommand{\Q}{\mathbf{Q}}
\renewcommand{\P}{\mathbf{P}}
\newcommand{\Hom}{\mathrm{Hom}}
\newcommand{\C}{\mathbf{C}}
\newcommand{\trop}{\text{trop}}
\newcommand{\R}{\mathbf{R}}
\newcommand{\cat}{\mathsf}
\newcommand{\pr}{\mathrm{pr}}
\renewcommand{\Pr}{\mathrm{Pr}}
\let\c@equation\c@thm
\numberwithin{equation}{section}
\title{Chow rings of heavy/light Hassett spaces via Tropical Geometry}
\author{Siddarth Kannan}\address{Siddarth Kannan, Department of Mathematics, Brown University, Providence, RI 02912}\email{siddarth\_kannan@brown.edu}
\author{Dagan Karp}\address{Dagan Karp, Department of Mathematics, Harvey Mudd College, Claremont, CA 91711}\email{dagan.karp@hmc.edu}
\author{Shiyue Li}\address{Shiyue Li, Department of Mathematics, Brown University, Providence, RI 02912}\email{shiyue\_li@brown.edu}
\date{\today}
\begin{document}

	\begin{abstract}
		We compute the Chow ring of an arbitrary \textit{heavy/light Hassett space} $\overline{M}_{0, w}$. These spaces are moduli spaces of weighted pointed stable rational curves, where the associated weight vector $w$ consists of only \textit{heavy} and \textit{light} weights. Work of Cavalieri et al. ~\cite{CHMR} exhibits these spaces as tropical compactifications of hyperplane arrangement complements. The computation of the Chow ring then reduces to intersection theory on the toric variety of the Bergman fan of a graphic matroid. Keel ~\cite{Keel} has calculated the Chow ring $A^*(\overline{M}_{0, n})$ of the moduli space $\overline{M}_{0, n}$ of stable nodal $n$-marked rational curves; his presentation is in terms of divisor classes of stable trees of $\P^1$'s having one nodal singularity. Our presentation of the ideal of relations for the Chow ring $A^*(\overline{M}_{0, w})$ is analogous. We show that pulling back under Hassett's birational reduction morphism $\rho_w: \overline{M}_{0, n} \to \overline{M}_{0, w}$ identifies the Chow ring $A^*(\overline{M}_{0, w})$ with the subring of $A^*(\overline{M}_{0, n})$ generated by divisors of \textit{$w$-stable trees}, which are those trees which remain stable in $\overline{M}_{0, w}$.
	\end{abstract}
	\maketitle
	\section{Introduction}	
	In ~\cite{Hassett}, Brendan Hassett introduced moduli spaces of $\Q$-weighted pointed stable curves, in the context of the log minimal model program. 
	For a pair $(g,w)$, where $g$ is a nonnegative integer and $w = (w_i) \in \left(\Q \cap (0, 1] \right)^n$ is a rational weight vector such that $2g - 2 + \sum w_i > 0$, he constructs smooth Deligne-Mumford stacks $\overline{\mathcal{M}}_{g, w}$, with corresponding coarse projective schemes $\overline{M}_{g, w}$, as alternate modular compactifications of the stack $\mathcal{M}_{g, n}$ (resp. $M_{g, n}$), which parameterizes smooth curves of genus $g$ with $n$ distinct marked points. The moduli space $\overline{\mathcal{M}}_{g, w}$ parameterizes pairs $(C, D)$ where $C$ is an at worst nodal curve of arithmetic genus $g$, and $D = \sum w_i P_i$ is a $\Q$-divisor weighted by $w$, such that $K_C + D$ is ample along each irreducible component of $C$. For certain choices of weight vector $w$, these spaces arise in the study of moduli of stable quotients ~\cite[Section 4]{marian2011moduli}, and when $g = 0$ and $w = (1, 1, \epsilon, \ldots, \epsilon)$, the Hassett space  coincides with the Losev-Manin modular compactification of $M_{0, n}$ ~\cite{LosevManin}.
	
	In this note we are concerned solely with the case $g = 0$, where the spaces $\overline{M}_{0, w}$ are smooth projective fine moduli schemes, parameterizing marked \textit{$w$-stable trees of $\P^1$'s}: the required ampleness of the divisor $K_C + D$ along each component can be reinterpreted as in the following definition.

	\begin{defn}\label{Trees}
		Fix an algebraically closed field $k$ and a vector of rational weights ${w \in (\Q \cap (0, 1])^n}$ satisfying $\sum w_i > 2$. For a $k$-scheme $B$, a \textit{family of nodal $w$-stable curve of genus $0$ over $B$} is a flat proper morphism $\pi: \mathcal{C} \to B$, together with $n$ sections $s_1, \ldots, s_n$, such that
		\begin{enumerate}[(a)]
			\item each geometric fiber $\mathcal{C}_b$ of $\pi$ is a reduced, connected curve of arithmetic genus $0$, the singularities of which are ordinary double points, called \textit{nodes};
			\item $s_{i_1}(b) = s_{i_2}(b) = \cdots = s_{i_p}(b)$ implies that \[\sum_{j = 1}^{p} w_{i_j} \leq 1;\]
			\item for an irreducible component $T \subseteq \mathcal{C}_b$, we have
			\[ \# \{\text{nodes on }T \} + \sum_{j \in J(T)} w_j > 2, \] 
			where $J(T) = \{j: s_j(b)\in T\}$.
		\end{enumerate}
		A curve appearing as the fiber $\mathcal{C}_b$ above is called a \textit{$w$-stable tree of $\P^1$'s}. 
	\end{defn}
	If we let $\overline{\mathcal{M}}_{0, w}: \cat{Sch}_k^{\cat{op}} \to \cat{Set}$ be the functor which takes a $k$-scheme $B$ to the set of $w$-stable curves over $B$, then $\overline{\mathcal{M}}_{0, w}$ is represented by a smooth projective scheme $\overline{M}_{0, w}$. When ${w = (1^n)}$, we recover the space $\overline{M}_{0, n}$ of $n$-marked stable rational curves, whose intersection theory was first computed by Keel ~\cite{Keel}, and then by Tavakol ~\cite{tavakol2017chow}; both use a description of $\overline{M}_{0, n}$ as an iterated blowup of $(\P^1)^{n - 3}$. 
	\begin{thm}[\cite{Keel}]\label{Keel}
		Let $n \ge 4$.
		The Chow ring of $\overline{M}_{0, n}$ is given as
		\[ A^*(\overline{M}_{0, n}) \cong \frac{ \Z[D^S \mid S \subseteq \{1, \ldots, n\}, |S|, |S^c| \geq 2]}{\langle \text{the following relations} \rangle},   \]
		\begin{enumerate}
			\item $D^T - D^{T^c} = 0$ for all $T$.
			\item $D^T D^S = 0$ unless one of the following holds:
			\[S \subseteq T,\, T \subseteq S, \, S^c \subseteq T\,, T^c \subseteq S. \]
			\item For any four distinct elements $i, j, k, \ell \in \{1, \ldots, n \}$,
			\[\sum_{\substack{i, j \in S \\ k, \ell \notin S }} D^S = \sum_{\substack{i, k \in S \\ j, \ell \notin S }} D^S = \sum_{\substack{i, \ell \in S \\ j, k \notin S }} D^S.\]
		\end{enumerate}	
	\end{thm}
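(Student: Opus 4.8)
The plan is to recover Keel's presentation along the lines Keel and Tavakol take, through an explicit iterated-blowup model, and to induct on $n$. Choosing three of the marked points, say $\{n-2,n-1,n\}$, and for each $j\in\{1,\dots,n-3\}$ the forgetful map $\overline{M}_{0,n}\to\overline{M}_{0,4}\cong\P^1$ remembering only $\{j,n-2,n-1,n\}$, one obtains a birational morphism $f\colon\overline{M}_{0,n}\to(\P^1)^{n-3}$; by Kapranov's description (equivalently Keel's tower of universal curves $\overline{M}_{0,k}\to\overline{M}_{0,k-1}$, each realized as a blowup of a $\P^1$-bundle) the map $f$ is a composition of blowups along smooth centers of codimension two, whose exceptional divisors and coordinate strict transforms are boundary divisors of $\overline{M}_{0,n}$. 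The base case $n=4$ is $A^*(\P^1)=\Z[D]/(D^2)$, which is exactly the asserted presentation: relation (3) identifies $D^{\{1,2\}}=D^{\{1,3\}}=D^{\{1,4\}}$, and relation (2) forces the square to vanish.

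The computation is then driven by two standard structural results: the Chow ring of the starting space, $A^*((\P^1)^{n-3})\cong\Z[x_1,\dots,x_{n-3}]/(x_j^2)$, and the blowup formula — if $Z\subseteq X$ is smooth of codimension $d$ with exceptional divisor $E\subseteq\mathrm{Bl}_Z X$, then $A^*(\mathrm{Bl}_Z X)$ is obtained from $A^*(X)$ and $A^*(Z)$ by adjoining $[E]$, subject to a polynomial relation governed by the Chern classes of $N_{Z/X}$ and to $[E]\cdot\ker(A^*(X)\to A^*(Z))=0$. Applying this stage by stage along $f$, the new classes produced at each step — the exceptional divisor of that blowup, together with pullbacks of already-constructed classes — lie in the $\Z$-span of the boundary divisors $D^S$, and conversely every $D^S$ arises this way; hence the $D^S$ generate $A^*(\overline{M}_{0,n})$. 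One then reads off the relations: (1) is the tautology $D^S=D^{S^c}$ from the self-duality of two-part partitions; the vanishing relations (2) encode that two blowup centers (equivalently, two boundary strata) are disjoint unless the corresponding subsets are nested, together with the self-intersection identities coming from $c_1(N_{Z/X})$; and (3) is the pullback, under each of the $\binom{n}{4}$ forgetful maps $\overline{M}_{0,n}\to\overline{M}_{0,4}\cong\P^1$, of the linear equivalence of the three boundary points of $\P^1$ — this is where the torus-linear ($\P^1$-factor) relations $x_j^2=0$ enter after repackaging.

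The main obstacle is the converse: that relations (1)–(3) \emph{generate} the full ideal, i.e. the displayed quotient ring is no larger than $A^*(\overline{M}_{0,n})$. The clean way to see this is to promote the induction to a statement about modules: the blowup formula presents $A^*(\overline{M}_{0,n})$ as a free module over $A^*(\overline{M}_{0,n-1})$ (hence, iterating, over $\Z$) with an explicit finite monomial basis indexed by the blowup centers and their admissible powers, and one must exhibit a spanning set of the same cardinality for the abstract ring $R_n$ defined by generators $D^S$ and relations (1)–(3). Sorting monomials in the $D^S$ modulo (1)–(3) according to how the added marked point sits on the underlying stable tree produces such a spanning set, so the canonical surjection $R_n\twoheadrightarrow A^*(\overline{M}_{0,n})$ is forced to be an isomorphism. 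This monomial bookkeeping is the technical heart of the argument — and it is precisely what the present paper will redo, in the more delicate setting of $w$-stable trees, for the spaces $\overline{M}_{0,w}$.

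It is worth recording an alternative proof in the spirit of what follows: $\overline{M}_{0,n}$ is the De Concini–Procesi wonderful compactification of the braid arrangement complement, equivalently the tropical compactification attached to the Bergman fan of the graphic matroid of the complete graph $K_{n-1}$. Its Chow ring therefore admits a combinatorial presentation (De Concini–Procesi, Feichtner–Yuzvinsky) with generators indexed by the elements of a building set — here the connected flats of $M(K_{n-1})$, which biject with boundary divisors of $\overline{M}_{0,n}$ — and relations coming from the nested-set fan together with the linear forms pulled back from the ambient torus. A change of generators matching these to the $D^S$, under which the torus-linear relations become precisely relation (3), recovers the stated presentation; this is the type of translation we carry out, in greater generality, for $\overline{M}_{0,w}$ in the sequel.
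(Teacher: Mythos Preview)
The paper does not prove this theorem; it is quoted from \cite{Keel} as background and motivation for the paper's own Theorem~\ref{Main}. There is therefore no proof in the paper to compare your attempt against.

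That said, your outline of Keel's iterated-blowup argument is a reasonable high-level sketch of how the original proof goes. Your framing, however, contains a misreading of what this paper does: you write that the monomial bookkeeping establishing completeness of the relations ``is precisely what the present paper will redo, in the more delicate setting of $w$-stable trees.'' It is not. The paper uses no blowup formula and no monomial-basis counting. Instead it follows exactly the alternative route you record in your final paragraph: $\overline{M}_{0,w}$ is identified as a wonderful (tropical) compactification, its Chow ring is identified with that of the ambient toric variety $X(\Sigma_w)$ via Theorem~\ref{ChowHA}, and the presentation is then read off directly from the combinatorics of the fan $\Sigma_w$ --- Stanley--Reisner relations from pairs of rays not spanning a cone, linear relations from the lattice of one-parameter subgroups. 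Specializing that computation to $w=(1^n)$ would recover Keel's presentation, but the paper does not carry this out explicitly; it simply cites Keel and moves on to the genuinely new case.
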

	We derive an analogous presentation for the spaces of weighted stable curves $\overline{M}_{0, w}$ whenever $w$ is \textit{heavy/light}, meaning that up to isomorphism, $\overline{M}_{0, w}$ is given by a weight vector ${w = (1^m, \epsilon^{(n - m)})}$ where $\epsilon < \frac{1}{n - m}$ and $m \geq 2$. These are the Hassett spaces whose intersection theory can be computed using tools from tropical geometry, in a sense to be described later. We prove the following theorem. 
	\begin{thm}\label{Main}
		Let $m \geq 2$ and $n \geq 4$, and suppose that $w$ is a heavy/light weight vector, with $m$ heavy and $(n - m)$ light weights. Then the Chow ring of $\overline{M}_{0, w}$ is given as follows:
		\[A^*(\overline{M}_{0, w}) = \frac{\Z\left[D^S \mid S \subsetneq \{2, \ldots, n \},\, \sum_{i \in S} w_i > 1\right]}{\langle \text{the following relations} \rangle}, \]
		\begin{enumerate}
			\item $D^S D^T = 0$ unless one of the following holds: $S \subseteq T$, $T \subseteq S$, $S \cap T = \varnothing$.
			\item For any pair of two-element subsets $\{i, j\}, \{k, \ell\} \subseteq \{2, \ldots, n \}$ with $i, k \leq m$, we have the linear relation
			\[ \sum_{\substack{S \not\supseteq \{k, \ell \}\\ S \supseteq \{i, j\}}} D^S = \sum_{\substack{S \supseteq \{k, \ell\} \\ S \not\supseteq \{ i, j \}}} D^S. \]
		\end{enumerate}
	\end{thm}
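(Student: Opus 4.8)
The plan is to bypass Keel's iterated blow-up and instead exploit the tropical description of $\overline{M}_{0,w}$ due to Cavalieri, Hampe, Markwig and Ranganathan \cite{CHMR}. First I would recall their result that $\overline{M}_{0,w}$ is a tropical compactification: the open part $M_{0,n}$ is the complement of a hyperplane arrangement, hence sits inside an algebraic torus, and $\overline{M}_{0,w}$ is its closure in the toric variety $X_{\Sigma_w}$ of the Bergman fan $\Sigma_w$ of a graphic matroid $M_w$. Here $M_w$ is the cycle matroid of the graph on vertex set $\{2,\dots,n\}$ whose edges are the pairs meeting the heavy label set $\{2,\dots,m\}$, arranged so that the rays of $\Sigma_w$ correspond to the subsets $S\subsetneq\{2,\dots,n\}$ with $\sum_{i\in S}w_i>1$ and the cones of $\Sigma_w$ are indexed by $w$-stable trees (equivalently, by sets of pairwise compatible $w$-stable splits). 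Since this compactification is \emph{sch\"on} and $|\Sigma_w|$ is the tropicalization of $M_{0,n}$ with its fine structure, the restriction map $A^*(X_{\Sigma_w})\to A^*(\overline{M}_{0,w})$ is a ring isomorphism, carrying the boundary divisor $D^S$ to the torus-invariant divisor of the ray labelled by $S$.

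Second, because $\Sigma_w$ is unimodular, $A^*(X_{\Sigma_w})$ is the quotient of the polynomial ring on the rays of $\Sigma_w$ by the Stanley--Reisner ideal of $\Sigma_w$ together with the linear relations pulled back from the characters of the ambient torus; for a Bergman fan this is exactly the Feichtner--Yuzvinsky presentation of the Chow ring of $M_w$. It remains to translate the two families of relations. The Stanley--Reisner ideal is generated by the squarefree monomials supported on the non-faces of $\Sigma_w$, so the combinatorial heart is to show that a set of $w$-stable subsets spans a cone of $\Sigma_w$ if and only if it is \emph{pairwise} compatible, and that a pair $\{S,T\}$ is compatible precisely when $S\subseteq T$, $T\subseteq S$, or $S\cap T=\varnothing$; this is a Buneman-type statement for the restricted family of splits, and it yields relation (1) with no higher minimal non-faces surviving. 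For the linear relations, the character lattice of the ambient torus is spanned by functionals indexed by pairs of edges of the underlying graph, that is, by pairs of two-element subsets $\{i,j\},\{k,\ell\}\subseteq\{2,\dots,n\}$ each containing a heavy label, which we may name $i$, resp.\ $k$; the relation attached to such a pair is $\sum_{S\supseteq\{i,j\}}D^S=\sum_{S\supseteq\{k,\ell\}}D^S$, and cancelling the common terms indexed by $S\supseteq\{i,j\}\cup\{k,\ell\}$ gives relation (2). Conversely these span the entire linear system, since fixing one edge and varying the other already produces a generating set.

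Finally, $\rho_w$ is a composition of blow-ups along smooth centres, so $\rho_w^{*}\colon A^*(\overline{M}_{0,w})\to A^*(\overline{M}_{0,n})$ is injective with image the subring generated by the classes $D^S$ attached to $w$-stable splits, and under this identification relations (1) and (2) are precisely the restrictions of Keel's relations (2) and (3) to the $w$-stable generators, after choosing representatives avoiding the label $1$. I expect the main obstacle to be the combinatorial core of the second step: extracting from \cite{CHMR} the precise matroid and fan, verifying the dictionaries ``ray $\leftrightarrow$ $w$-stable subset'' and ``cone $\leftrightarrow$ $w$-stable tree'' with the normalization at the label $1$, checking that faces of $\Sigma_w$ are detected pairwise, and pinning down the character lattice precisely enough to obtain the two-term form of relation (2) rather than a four-term WDVV-type relation. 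One must also confirm that (1) and (2) \emph{generate} the ideal of relations and do not merely lie in it; as in Keel's original argument this is cleanest via an explicit monomial basis for the quotient together with a comparison of ranks in each degree.
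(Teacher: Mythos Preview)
Your plan is correct and essentially the same as the paper's: invoke \cite{CHMR} to realize $\overline{M}_{0,w}$ as a tropical (wonderful) compactification inside $X(\Sigma_w)$, use the resulting isomorphism $A^*(\overline{M}_{0,w})\cong A^*(X(\Sigma_w))$, and then read off the Stanley--Reisner and linear relations from the combinatorics of $\Sigma_w$; your derivation of the linear relations (write $\sum_{S\supseteq\{i,j\}}D^S=\sum_{S\supseteq\{k,\ell\}}D^S$ and cancel the common terms) is an equivalent repackaging of the paper's basis-dependent computation using $\mathcal{B}_{k,\ell}$. Two minor corrections: the hyperplane-arrangement complement serving as the open stratum is $M_{0,w}$ (smooth but not necessarily distinctly marked curves), not $M_{0,n}$, since it is the pair $M_{0,w}\subset\overline{M}_{0,w}$ that is simple normal crossings; and your final paragraph on $\rho_w^*$ concerns a separate statement (the paper's subring theorem) and is unnecessary here---once you have the standard toric presentation of $A^*(X(\Sigma_w))$ and have checked, as you propose, that the nested-set complex for $1$-connected flats is a flag complex, relations (1) and (2) automatically \emph{generate} the ideal, so no monomial-basis or rank comparison is needed.
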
	
	The Chow ring is generated by divisors, which correspond to the combinatorial types of $w$-stable trees: for $S \subsetneq \{2,\ldots, n\}$ with $\sum_{i \in S} w_i > 1$, the divisor $D^S \subseteq \overline{M}_{0,w}$ is the closure of the locus of nodal curves with two components, one of which is marked by $S$, and the other marked by $S^c \cup \{1\}$. The multiplicative relation in Theorem \ref{Main} then follows from the fact that the divisors $D^S$ and $D^T$ are disjoint unless $S \subseteq T$, $T\subseteq S$, or $S \cap T = \varnothing$.
	
	\begin{rem}
	An alternative, but equivalent set of linear relations to those in Theorem \ref{Main} can be obtained by exploiting forgetful maps between heavy/light Hassett spaces. Indeed, given distinct indices $i, j, k, \ell \in \{2, \ldots, n\}$ such that $i, k \leq m$, one gets a map
	\[\pi_{i, j, k, \ell}: \overline{M}_{0, w} \to \overline{M}_{0, (w_i, w_j, w_k, w_\ell)} \]
	by forgetting all marked points except those indexed by $i$, $j$, $k$, and $\ell$.
	Since $\overline{M}_{0,(w_i, w_j, w_k, w_\ell)}$ is a proper nonsingular curve which admits a birational morphism from $\overline{M}_{0, 4} \cong \P^1$, we have $\overline{M}_{0,(w_i, w_j, w_k, w_\ell)} \cong \P^1$. Then linear relations in the Chow ring of $\overline{M}_{0, w}$ may be obtained by pulling back the linear equivalence between any two points on $\P^1$ along $\pi_{i, j, k, \ell}$; this is how one obtains Keel's linear relations in Theorem \ref{Keel}. The linear relations in Theorem \ref{Main} are the ones arising naturally from matroid theory and toric geometry. 
	\end{rem}
	
	To better relate our presentation to Keel's via the reduction morphisms introduced by Hassett, we also prove the following theorem.
	\begin{thm}
		Let $w$ be a heavy/light weight vector, and let $\rho_w: \overline{M}_{0, n} \to \overline{M}_{0, w}$ denote the corresponding birational reduction morphism. Then the pullback
		\[\rho_w^*: A^*(\overline{M}_{0, w}) \to A^*(\overline{M}_{0, n}) \]
		identifies the Chow ring of $\overline{M}_{0, w}$ with the subring of $A^*(\overline{M}_{0, n})$ generated by the divisor classes of $w$-stable trees.
	\end{thm}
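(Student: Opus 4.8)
The strategy is to reduce the statement to one geometric computation: the value of $\rho_w^*$ on the boundary divisors that generate $A^*(\overline{M}_{0,w})$. Arrange (as in the setup of Theorem~\ref{Main}) that index $1$ is heavy. For $S \subsetneq \{2,\dots,n\}$ with $\sum_{i\in S}w_i>1$ — the generators of $A^*(\overline{M}_{0,w})$ in Theorem~\ref{Main} — the complementary inequality $\sum_{i\in S^c}w_i>1$ holds automatically (since $1\in S^c$ is heavy and $|S^c|\ge 2$), so these partitions $\{S,S^c\}$ of $\{1,\dots,n\}$ are exactly the $w$-stable one-edge trees. The first and crucial step is to show that $\rho_w^*$ carries the Hassett boundary divisor $D^S\subseteq\overline{M}_{0,w}$ to Keel's boundary divisor $D^S\subseteq\overline{M}_{0,n}$ (under the convention $D^S=D^{S^c}$), for every such $S$.

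To prove this I would use Hassett's description of $\rho_w$ as a birational morphism of smooth projective varieties that is an isomorphism over the dense open $U\subseteq\overline{M}_{0,w}$ parameterizing $w$-stable curves with pairwise distinct marked points (such a curve is automatically $n$-pointed stable and admits no nontrivial destabilization). A general point of $D^S\subseteq\overline{M}_{0,w}$ lies in $U$ and parameterizes two lines glued at a node with the marks distributed by $S$ and $S^c$; this curve is already $w$-stable precisely because both $\sum_{i\in S}w_i>1$ and $\sum_{i\in S^c}w_i>1$. Hence the strict transform of $D^S\subseteq\overline{M}_{0,w}$ is $D^S\subseteq\overline{M}_{0,n}$, and $\rho_w^*[D^S]=[D^S]+E$ with $E$ effective and supported on the exceptional locus of $\rho_w$. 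To see $E=0$: every exceptional divisor of $\rho_w$ is a boundary divisor $D^T\subseteq\overline{M}_{0,n}$ one of whose parts $T$ has $\sum_{i\in T}w_i\le 1$, and for such $T$ the reduction morphism collapses the entire $T$-tail of the corresponding curves to a single point, so $\rho_w$ maps $D^T$ onto a subvariety whose generic point parameterizes an \emph{irreducible} curve. Since every curve parameterized by $D^S\subseteq\overline{M}_{0,w}$ is nodal, no exceptional divisor can map into it, so $E=0$.

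Granting this, the theorem is formal. Since $A^*(\overline{M}_{0,w})$ is generated as a ring by the classes $D^S$ (Theorem~\ref{Main}), the image of $\rho_w^*$ is the subring of $A^*(\overline{M}_{0,n})$ generated by the classes $[D^S]$ attached to $w$-stable one-edge trees; and by Keel's presentation the class of any $w$-stable tree with more edges is the product of the boundary divisors of its edges — each of which is itself a $w$-stable one-edge tree — so this subring is precisely the subring generated by the divisor classes of all $w$-stable trees. Finally $\rho_w^*$ is injective: since $\rho_w$ is birational, $\rho_{w*}[\overline{M}_{0,n}]=[\overline{M}_{0,w}]$, so the projection formula gives $\rho_{w*}\circ\rho_w^*=\mathrm{id}$ on $A^*(\overline{M}_{0,w})$. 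Therefore $\rho_w^*$ is an isomorphism of $A^*(\overline{M}_{0,w})$ onto the subring of $A^*(\overline{M}_{0,n})$ generated by divisor classes of $w$-stable trees.

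The one nonformal point is the vanishing of $E$ in the first step, i.e. that no exceptional divisor of $\rho_w$ contributes to $\rho_w^*[D^S]$. This rests on a sufficiently precise grip on Hassett's reduction morphism: which boundary divisors of $\overline{M}_{0,n}$ it contracts, and the fact that a contracted divisor corresponds to collapsing a tail of low-weight bubbles to a point, so that its image generically parameterizes irreducible curves and hence cannot lie inside the everywhere-nodal locus $D^S\subseteq\overline{M}_{0,w}$. Everything downstream of that input is a routine combination of the presentations in Theorems~\ref{Keel} and~\ref{Main} with the projection formula.
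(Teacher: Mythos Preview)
Your argument is correct, but the route is genuinely different from the paper's. The paper proves an intermediate lemma extending $\rho_w$ to a toric morphism $X(\pr_w): X(\Sigma_n) \to X(\Sigma_w)$ via a commuting square, and then deduces everything from the combinatorics of the fan projection $\pr_w$: since $\pr_w$ collapses exactly the rays indexing non-$w$-stable types and is injective on the remaining cones, the toric pullback $X(\pr_w)^*$ is injective with image generated by the surviving torus-invariant divisors; the identifications $\iota^*$, $\iota_w^*$ then transport this to the moduli spaces. You bypass the toric extension entirely, computing $\rho_w^*[D^S]=[D^S]$ directly from Hassett's description of the reduction morphism (strict transform plus no exceptional contribution, because contracted divisors land generically in the smooth locus $M_{0,w}$), and you obtain injectivity from the projection formula for the birational map $\rho_w$.

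Your approach is more self-contained and makes no use of the ambient toric varieties beyond the input of Theorem~\ref{Main}; the injectivity via $\rho_{w*}\circ\rho_w^*=\mathrm{id}$ is especially clean. The paper's approach has the virtue of fitting the theorem into the tropical narrative and making the statement visibly combinatorial (it is literally the statement that $\pr_w$ crushes only the non-$w$-stable rays). One small point worth making explicit in your write-up: the aside that every edge of a $w$-stable multi-edge tree yields a $w$-stable one-edge tree is true but not tautological---it uses that in a $w$-stable tree every leaf vertex must carry a heavy mark (else its valence-plus-weight is at most $1 + (n-m)\epsilon < 2$), so each side of any edge contains a heavy mark and at least two marks. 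This remark is in any case unnecessary for the theorem as stated, since ``divisor classes of $w$-stable trees'' refers to the codimension-one boundary strata.
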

	We note that in principle, Theorem ~\ref{Main} follows from Petersen's ~\cite{petersen2017} work on Chow rings of Fulton-MacPherson compactifications, but our method is completely different, exhibiting an application of tropical geometry.
	\subsection{Tropical moduli spaces of weighted stable rational curves.} The relationship between $\overline{M}_{0, n}$ and its tropical counterpart $M_{0, n}^\trop$ is a beautiful example of tropical mathematics as a bridge between algebraic geometry and combinatorics, particularly graph and matroid theory. The space $M_{0, n}^\trop$ has a concrete description as a cone complex, which parameterizes metrics on dual graphs of singular curves in the boundary $\overline{M}_{0, n} \setminus M_{0, n}$. This space is also the well-known space of phylogenetic trees, which can be realized as the Bergman fan of the graphic matroid defined from the complete graph on $n - 1$ vertices ~\cite{Ardila}. The cone complex $M_{0, n}^\trop$ is also a fine moduli space in the categorical sense: $M_{0, n}^\trop$ represents a functor taking a rational polyhedral cone complex $\Sigma$ to the set of families $C \to \Sigma$ of $n$-marked tropical curves of genus $0$ over $\Sigma$; see ~\cite{francois_hampe_2013}, ~\cite{CCUW} for a discussion of this perspective.\\
\indent 	While the algebro-geometric moduli space $\overline{M}_{0, n}$ and the combinatorial moduli space $M_{0, n}^\trop$ may be defined independently of one another, one can recover $M_{0, n}^\trop$ as the cone over the dual intersection complex of the simple normal crossings (snc) compactification $M_{0, n} \subseteq \overline{M}_{0, n}$; see ~\cite{Chan} for a detailed discussion of this perspective in the more general setting of the spaces $M_{g, n}^\trop$. The procedure of \textit{geometric tropicalization} gives a way to embed the cone complex $M_{0, n}^\trop$ as a balanced fan in a real vector space. In this framework, one begins with a snc compactification $U \subseteq Y$; put $\Sigma$ for the associated cone complex which is given as the cone over the dual intersection complex. When the complement of $U$ in $Y$ has enough invertible functions, there is an associated embedding $\iota: U \hookrightarrow T$ of $U$ into a torus. The map $\iota$, together with work of Cueto ~\cite{Cueto}, can then be used to embed $\Sigma$ as a balanced fan in a real vector space. This gives a toric variety $X(\Sigma)$ with dense torus $T$, and taking the closure $\overline{\iota(U)}$ gives a compactification of $U$. See ~\cite{MStropical} for a detailed treatment of geometric tropicalization.\\
	 \indent In ~\cite{GMequations}, Gibney and Maclagan carry out the process of geometric tropicalization for the snc pair $M_{0, n} \subseteq \overline{M}_{0, n}$. Write $\Sigma_{n} = B(K_{n - 1})$ for the Bergman fan of the complete graph $K_{n - 1}$; then $\Sigma_{n}$ is the space of phylogenetic trees, embedding the cone complex $M_{0, n}^\trop$ as a balanced fan. Gibney and Maclagan identify $M_{0, n}$ with a quotient by $T^n$ of the open subset $\mathrm{Gr}^0(2, n)$ of the Grassmannian $\mathrm{Gr}(2, n)$ given by the nonvanishing of the Pl{\"u}cker coordinates. They prove the following theorem.
	\begin{thm}[\cite{GMequations}]
		The tropicalization of the torus embedding
		\[M_{0, n} \hookrightarrow T^{\binom{n}{2}}/T^n \]
		gives the fan $\Sigma_{n}$, whose underlying cone complex is identified with $M_{0, n}^\trop$. The toric variety $X(\Sigma_{n})$ has torus $T^{\binom{n}{2}}/T^n$, and taking the closure of $M_{0, n}$ in this toric variety yields the Knudsen-Mumford compactification $\overline{M}_{0, n}$.
	\end{thm}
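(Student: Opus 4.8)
The plan is to treat the statement as an instance of Tevelev's theory of tropical compactifications: compute $\trop M_{0,n}$ from the Pl\"ucker relations, recognize it as the Bergman fan $\Sigma_n$, and then identify the closure of $M_{0,n}$ inside $X(\Sigma_n)$ with $\overline{M}_{0,n}$ by comparing boundary stratifications. First I would make the torus embedding explicit. The Pl\"ucker embedding carries $\mathrm{Gr}(2,n)$ into $\P^{\binom{n}{2}-1}$ with coordinates $p_{ij}$, and the open locus $\mathrm{Gr}^0(2,n)$ on which every $p_{ij}$ is nonzero lands in the dense torus of $\P^{\binom{n}{2}-1}$. Rescaling the $n$ columns of a $2\times n$ representative scales $p_{ij}$ by $t_it_j$, so the residual $T^n$-action is the homomorphism $T^n\to T^{\binom{n}{2}}$, $(t_i)\mapsto(t_it_j)_{i<j}$; passing to the quotient realizes the embedding $M_{0,n}=\mathrm{Gr}^0(2,n)/T^n\hookrightarrow T^{\binom{n}{2}}/T^n$.

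Next I would compute the tropicalization. The ideal of $\mathrm{Gr}(2,n)$ is generated by the three-term Pl\"ucker relations $p_{ij}p_{k\ell}-p_{ik}p_{j\ell}+p_{i\ell}p_{jk}$, so by the Fundamental Theorem of Tropical Geometry $\trop \mathrm{Gr}(2,n)$ lies in the locus where each tropicalized relation $\min\{x_{ij}+x_{k\ell},\,x_{ik}+x_{j\ell},\,x_{i\ell}+x_{jk}\}$ attains its minimum at least twice. Following Speyer and Sturmfels I would verify that this system is exactly the four-point condition, so that $\trop \mathrm{Gr}(2,n)$ is the space of phylogenetic trees on $n$ leaves. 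The tropicalized subtorus contributes the lineality space $\{(a_i+a_j)_{i<j}\}$, and the induced quotient map realizes $\trop M_{0,n}$ as its image in $\R^{\binom{n}{2}}/\R^n$. Invoking the identification of the space of phylogenetic trees with the Bergman fan $B(K_{n-1})$ of the graphic matroid of the complete graph (Ardila--Klivans, \cite{Ardila}), I would conclude that $\trop M_{0,n}=\Sigma_n$ as cone complexes, which gives the first assertion and the identification with $M_{0,n}^\trop$; since the quotient identifies the cocharacter lattice as $\Z^{\binom{n}{2}}/\Z^n$, the toric variety $X(\Sigma_n)$ has dense torus $T^{\binom{n}{2}}/T^n$, as claimed.

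It then remains to identify the closure $Z$ of $M_{0,n}$ in $X(\Sigma_n)$ with $\overline{M}_{0,n}$. Because $M_{0,n}$ is the complement of a hyperplane arrangement and $\Sigma_n$ is the Bergman fan of the associated matroid, $M_{0,n}$ is \emph{sch\"on}, so by Tevelev's criterion $Z$ is a proper tropical compactification with simple normal crossings boundary, and it is independent of the chosen fan refinement. The substance is then combinatorial: the torus-orbit stratification of $X(\Sigma_n)$ cuts $Z$ into strata indexed by the cones of $\Sigma_n$, that is, by phylogenetic trees. I would verify that the stratum attached to a cone is the product of smaller moduli spaces prescribed by the corresponding tree, matching the dual-graph stratification of the Knudsen--Mumford boundary. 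Comparing the two stratifications on open strata and invoking properness and normality on both sides yields an isomorphism $Z\cong\overline{M}_{0,n}$.

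The main obstacle is this final identification. Establishing sch\"onness and properness is routine once the matroidal description of $\Sigma_n$ is in hand, but upgrading the natural bijection between torus-orbit strata of $Z$ and boundary strata of $\overline{M}_{0,n}$---both indexed by trees---into an isomorphism of compactifications, rather than a mere stratified bijection, requires matching how neighboring strata are glued. Concretely this means tracking the degeneration of cross-ratios along one-parameter families to confirm that a limiting curve acquires exactly the node recorded by the limiting tree, which is where the intrinsic geometry of $\overline{M}_{0,n}$ must be used.
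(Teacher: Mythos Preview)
The paper does not prove this theorem; it is stated with attribution to \cite{GMequations} (Gibney--Maclagan) and used as a black box. There is therefore no argument in the paper to compare your proposal against.

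That said, your outline is a reasonable reconstruction of the Gibney--Maclagan result, and the ingredients you name (Speyer--Sturmfels for $\trop\mathrm{Gr}(2,n)$, Ardila--Klivans for the Bergman fan identification, Tevelev for sch\"onness and tropical compactification) are the standard ones. You are also right that the delicate step is the final identification of the closure $Z$ in $X(\Sigma_n)$ with the Knudsen--Mumford space $\overline{M}_{0,n}$: a stratified bijection plus normality and properness is not by itself enough, and one does need to check that the boundary gluing data match. In the literature this is typically handled either by Kapranov's description of $\overline{M}_{0,n}$ as the Chow quotient of $\mathrm{Gr}(2,n)$, or by showing directly that both compactifications arise as the same iterated blow-up, rather than by the cross-ratio degeneration argument you sketch at the end.
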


Ulirsch ~\cite{Ulirsch} showed that while $M_{0, n} \subseteq \overline{M}_{0, w}$ is in general \textit{not} an snc compactification, the pair $M_{0, w} \subseteq \overline{M}_{0, w}$ is snc, where $M_{0, w}$ denotes the locus of smooth, not necessarily distinctly marked curves. Motivated by this observation and the work of Gibney-Maclagan, Cavalieri et al. ~\cite{CHMR} run geometric tropicalization for the snc compactification ${M_{0, w} \subseteq \overline{M}_{0, w}}$. On the tropical side, the authors show that the cone complex $M_{0, w}^\trop$, which is a moduli space for $w$-stable tropical curves of genus $0$, can only be embedded as a balanced fan in a real vector space when $w$ consists of only \textit{heavy} and \textit{light} entries, in the sense of the following definition.
	
	\begin{defn}
		Let $w = (w_i) \in \Q^n$ be a vector of rational weights satisfying $\sum w_i > 2$. We say $w_i$ is \textit{heavy} in $w$ if $w_i + w_j > 1$ for all $j \neq i$, and we say $w_j$ is \textit{light} in $w$ if $w_i + w_j > 1$ implies that $w_i$ is heavy. We say the vector $w$ is \textit{heavy/light} if it consists of only heavy and light weights.
		We say that $\overline{M}_{0, w}$ is heavy/light if the weight vector $w$ is heavy/light.
	\end{defn}
	By pursuing a construction analogous to the work of ~\cite{GMequations} for the spaces $M_{0, w}$, Cavalieri et al. prove the following theorem.
	\begin{thm}[\cite{CHMR}]\label{TropicalCompactificationM0w}
		The cone complex $M_{0, w}^\trop$ can be embedded as a balanced fan in a vector space if and only if $w$ is heavy/light. For such $w$, there is a torus embedding
		\[M_{0, w} \hookrightarrow T_w := T^{{n \choose 2} - {n - m \choose 2} - n} \]
		whose tropicalization $\Sigma_{w}$ has underlying cone complex $M_{0, w}^\trop$. The toric variety $X(\Sigma_{w})$ has torus $T_w$, and taking the closure of $M_{0, w}$ in this toric variety yields Hassett's original compactification $\overline{M}_{0, w}$.
	\end{thm}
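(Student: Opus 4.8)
The plan is to realize $\overline{M}_{0,w}$ as a tropical compactification of the very affine variety $M_{0,w}$, by running geometric tropicalization on the snc pair $M_{0,w}\subseteq\overline{M}_{0,w}$ furnished by Ulirsch \cite{Ulirsch}, and then matching the resulting balanced fan with the combinatorial moduli space $M_{0,w}^\trop$ of $w$-stable tropical curves. Concretely, a point of $M_{0,w}$ is a smooth rational curve $(\P^1;p_1,\dots,p_n)$ whose markings are distinct among the heavy indices, never coincide for a heavy--light pair, and are otherwise unrestricted among the light indices. Normalizing three heavy markings to $0,1,\infty$, the functions $p_i-p_j$ are regular and invertible on $M_{0,w}$ precisely when $\{i,j\}$ is \emph{not} a pair of light indices; they assemble to a map $M_{0,w}\to T^{\binom{n}{2}-\binom{n-m}{2}}$, and quotienting by the evident $T^n$-rescaling yields an embedding $M_{0,w}\hookrightarrow T_w=T^{\binom{n}{2}-\binom{n-m}{2}-n}$. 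As in the work of Gibney--Maclagan \cite{GMequations}, one checks that $M_{0,w}$ thereby becomes very affine, containing $M_{0,n}$ as a dense open subset whose complement is the locus where light points collide.

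\textbf{Step 2: geometric tropicalization and identification with $M_{0,w}^\trop$.} By \cite{Ulirsch} the boundary $\overline{M}_{0,w}\setminus M_{0,w}$ is snc, its components being exactly the divisors $D^S$ of $w$-stable trees. The cone over the dual complex of this boundary maps to the cocharacter space $N_\R$ of $T_w$ by sending the ray of $D^S$ to the vector of orders of vanishing $\bigl(\val_{D^S}(p_i-p_j)\bigr)_{i,j}$, and the geometric tropicalization theorem (Hacking--Keel--Tevelev; Cueto \cite{Cueto}) identifies the image of this map with $\trop(M_{0,w}\subseteq T_w)$. The substantive point is combinatorial: a maximal cone of the dual complex corresponds to a maximal degeneration of $(\P^1;p_i)$, i.e. to a trivalent $w$-stable tree of $\P^1$'s, hence to a top-dimensional cone of $w$-stable tropical curves, and the valuation computation converts the edge lengths of the dual graph into the usual tropical cross-ratio coordinates. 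Carrying this out shows $\trop(M_{0,w})$ is the support of a rational polyhedral fan $\Sigma_w$ whose cones biject with $w$-stable tropical curve types; moreover $\Sigma_w$ is the Bergman fan of a graphic matroid, hence balanced with all tropical multiplicities equal to $1$.

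\textbf{Step 3: the ``if and only if'' and the isomorphism with $\overline{M}_{0,w}$.} One shows the geometric tropicalization map realizes $M_{0,w}^\trop$ as a fan in $N_\R$ precisely when $w$ is heavy/light: for such $w$ the order-of-vanishing vectors are linearly independent along each cone and distinct cones map to cones meeting in common faces, whereas a weight that is neither heavy nor light produces two $w$-stable tropical curve types whose images are forced to overlap, obstructing any realization of $M_{0,w}^\trop$ as a balanced fan. With $\Sigma_w$ in hand, one applies Tevelev's criterion: $M_{0,w}$ is sch\"on --- its boundary is snc and $\trop(M_{0,w})$ is pure of the expected dimension $\dim M_{0,w}$ with multiplicity one --- so its closure in the toric variety $X(\Sigma_w)$ is a tropical compactification, proper over the ground field, whose boundary strata correspond order-reversingly to the cones of $\Sigma_w$; matching these with the $w$-stable-tree strata of $\overline{M}_{0,w}$, or invoking the universal property of $\overline{M}_{0,w}$, identifies the closure with $\overline{M}_{0,w}$. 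The step I expect to be the main obstacle is the precise matching of $\trop(M_{0,w})$ with $M_{0,w}^\trop$ as \emph{integral} polyhedral complexes, together with the verification that $M_{0,w}$ is sch\"on so that Tevelev's theorem applies; the ``only if'' direction is likewise delicate, being a genuine nonembeddability statement about the cone complex of $w$-stable tropical curves when an intermediate weight is present.
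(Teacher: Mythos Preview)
This theorem is not proved in the present paper; it is stated with attribution to \cite{CHMR} and used as a black box throughout. There is thus no proof here to compare your proposal against. Your sketch is, however, broadly consonant with the narrative the paper gives in the introduction and in Section~4 about how \cite{CHMR} obtain the result: a torus embedding of $M_{0,w}$ via a subset of the Pl\"ucker coordinates, geometric tropicalization of the snc pair $M_{0,w}\subseteq\overline{M}_{0,w}$ (using \cite{Ulirsch} and \cite{Cueto}), identification of the resulting fan with the Bergman fan of the graphic matroid on the reduced weight graph, and Tevelev's sch\"on/tropical-compactification machinery to recover $\overline{M}_{0,w}$ as the closure in $X(\Sigma_w)$.

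One small slip worth flagging: in Step~1 you normalize ``three heavy markings'' to $0,1,\infty$, but the hypothesis only guarantees $m\geq 2$, so in the Losev--Manin case $w=(1,1,\epsilon,\ldots,\epsilon)$ there are only two heavy marks and your normalization is unavailable. The paper's own account of the embedding (Section~4, around the matrix display~(\ref{MatrixDescription})) works uniformly instead: one encodes a point of $M_{0,w}$ as a $2\times n$ matrix up to the $T^n$-action, and takes exactly those Pl\"ucker minors indexed by pairs $\{i,j\}$ with $w_i+w_j>1$. This gives the torus $T^{\binom{n}{2}-\binom{n-m}{2}-n}$ without having to choose three distinguished heavy points.
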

	The fans $\Sigma_{w}$ have explicit descriptions as Bergman fans of graphic matroids and as projections of the fan $\Sigma_{n}$. In particular, these theorems exhibit the spaces $\overline{M}_{0, w}$ as \textit{wonderful compactifications} of hyperplane arrangement complements. 
	\subsection{Wonderful compactifications of hyperplane arrangement complements} Given a central arrangement of hyperplanes (codimension-one linear subspaces) $\mathcal{A}$ in $\C^n$, it is interesting to ask how the geometry and topology of the arrangement complement \[Y = \C^n \setminus \cup_{H \in \mathcal{A}} H \]
	depends on the combinatorics of $\mathcal{A}$. In ~\cite{dCP}, de Concini and Procesi study normal crossing compactifications $Y \subseteq \overline{Y}_{\mathcal{G}}$,
    termed \textit{wonderful models} or \textit{wonderful compactifications} in the literature, which depend on a choice of building set $\mathcal{G}$ for the intersection lattice $\mathcal{L}_{\mathcal{A}}$ defined by the members of $\mathcal{A}$ and all of their intersections. They also compute the cohomology ring $H^*(\overline{Y}_{\mathcal{G}}; \Z)$, showing that it only depends on $\mathcal{G}$ and $\mathcal{A}$.\\
\indent	The connection to tropical geometry is summarized as follows: a hyperplane arrangement complement $Y = \C^n \setminus \mathcal{A}$ has an embedding into an intrinsic torus $\iota: Y \hookrightarrow T = \Hom(\C^\times, Y)$, and a choice of building set for the intersection lattice of $\mathcal{A}$ is the same as a choice of building set $\mathcal{G}$ for the lattice of flats of the associated realizable matroid on the normal vectors of hyperplanes in $\mathcal{A}$. Maclagan and Sturmfels ~\cite{MStropical} describe how this choice of building set determines a fan structure $\Sigma_{\mathcal{G}}$ on the tropicalization of the torus embedding $Y \hookrightarrow T$, and hence a compactification $\overline{\iota(Y)} = \overline{Y}_{\mathcal{G}}$ by taking the closure of $\iota(Y)$ in the toric variety $X(\Sigma_{\mathcal{G}})$. Using the work of ~\cite{FYatomic} and ~\cite{Tevelev}, they conclude the following theorem.
	\begin{thm}[\cite{MStropical}]\label{ChowHA}
	The pullback $\iota^*$ induces an isomorphism of Chow rings
	\[ A^*(X(\Sigma_{\mathcal{G}})) \cong A^*(\overline{Y}_\mathcal{G}).  \]
	where $\overline{Y}_{\mathcal{G}}$ is the wonderful compactification of the hyperplane arrangement $Y$ associated to the building set $\mathcal{G}$.
	\end{thm}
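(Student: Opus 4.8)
The plan is to identify both Chow rings with one and the same explicit quotient of a polynomial ring, and then to check that $\iota^*$ realizes that identification. On the toric side, I would first use that since $\mathcal{G}$ is a building set the nested set fan $\Sigma_{\mathcal{G}}$ is unimodular, so $X(\Sigma_{\mathcal{G}})$ is a smooth complete toric variety; by Danilov's presentation of the Chow ring of a smooth toric variety (see also Cox--Little--Schenck),
\[
A^*(X(\Sigma_{\mathcal{G}})) \;\cong\; \Z[x_\rho : \rho \in \Sigma_{\mathcal{G}}(1)]\big/\big(I_{\mathrm{SR}} + J_{\mathrm{lin}}\big),
\]
where $I_{\mathrm{SR}}$ is the Stanley--Reisner ideal of the nested set complex --- the monomials $x_{\rho_1}\cdots x_{\rho_k}$ over collections of rays that do not span a cone --- and $J_{\mathrm{lin}}$ is generated by the linear forms $\sum_\rho \langle m, u_\rho\rangle\, x_\rho$, as $m$ runs over a basis of the character lattice of the dense torus. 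On the wonderful side, the de Concini--Procesi model has a Chow ring of exactly the same shape, by the theorem of Feichtner--Yuzvinsky: generators $x_G$ indexed by $G \in \mathcal{G}$, Stanley--Reisner-type relations indexed by non-nested subsets of $\mathcal{G}$, and linear relations recording the linear equivalences among the boundary divisors. Since the rays of $\Sigma_{\mathcal{G}}$ are in bijection with $\mathcal{G}$ and its cones with nested sets, these two presentations are literally the same ring once one matches $\rho \leftrightarrow G$.

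Next I would identify $\iota^*$ with this matching, using Tevelev's theory of tropical compactifications. Because $\Sigma_{\mathcal{G}}$ is a unimodular fan whose support is $\trop(Y)$ for the embedding $Y \hookrightarrow T$, the closure $\overline{Y}_{\mathcal{G}}$ is a smooth tropical compactification: it is complete, it meets every torus orbit $O_\sigma$ of $X(\Sigma_{\mathcal{G}})$ in a smooth subvariety of the expected codimension, and its boundary is a simple normal crossing divisor whose stratification matches $\Sigma_{\mathcal{G}}$. It follows that $\iota^*[D_\rho]$ is the class of the boundary divisor of $\overline{Y}_{\mathcal{G}}$ attached to the corresponding $G$, and that a Stanley--Reisner monomial pulls back to a product of boundary divisors indexed by a non-nested set, hence to $0$ since such divisors have empty common intersection; the linear relations are respected for the same reason. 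Thus $\iota^*$ descends to the presentation-matching ring map $x_\rho \mapsto x_G$.

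It then remains to show this map is bijective. For surjectivity I would use the localization sequence: $Y = \overline{Y}_{\mathcal{G}} \cap T$ is an open subvariety of affine space, so $A^*(Y) = \Z$ is concentrated in degree $0$, and inducting on the toric stratification of $\overline{Y}_{\mathcal{G}}$ shows that $A^*(\overline{Y}_{\mathcal{G}})$ is generated by $1$ together with the fundamental classes of the closed boundary strata --- each a product of boundary divisor classes, hence in the image of $\iota^*$. Injectivity is the crux, and the step I expect to be genuinely hard: one must rule out any relation among the boundary divisors beyond the Stanley--Reisner and linear ones. This is exactly what Feichtner--Yuzvinsky prove, by building $\mathcal{G}$ up one element at a time, realizing the corresponding wonderful models as a tower of blowups along smooth centers, and propagating the Chow ring through the blowup formula --- matched on the toric side by the fact that $X(\Sigma_{\mathcal{G}})$ arises from projective space by the parallel tower of toric blowups. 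Alternatively, one could combine the surjectivity already established with a rank count: both varieties are smooth, complete, and admit affine pavings, so Chow agrees with cohomology, and the Betti numbers computed geometrically by de Concini--Procesi match the combinatorially computed graded ranks of the toric presentation, giving the isomorphism over $\Q$; it is then upgraded over $\Z$ by checking that the presenting ring is free over $\Z$, which it is since it carries an explicit monomial basis. Either way, ruling out the extra relations on $\overline{Y}_{\mathcal{G}}$ is the main obstacle; the toric presentation, Tevelev's input pinning down $\iota^*$ on boundary classes, and the surjectivity are comparatively formal.
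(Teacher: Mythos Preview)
The paper does not give its own proof of this theorem: it is quoted from Maclagan--Sturmfels, with the attribution ``using the work of \cite{FYatomic} and \cite{Tevelev}''. So there is no in-paper argument to compare against; your outline is essentially the argument in the cited literature (toric presentation on one side, Feichtner--Yuzvinsky on the other, with Tevelev's tropical-compactification transversality identifying $\iota^*$ on boundary divisors).

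That said, there is a genuine slip in your write-up. The fan $\Sigma_{\mathcal{G}}$ is \emph{not} complete in general: the (reduced) Bergman fan of a rank-$r$ matroid on a ground set of size $n$ has dimension $r-1$ inside $\R^{n-1}$, so it is complete only when $r=n$, i.e.\ when the matroid is Boolean. For the graphic matroids $M(w)$ in this paper that happens exactly in the Losev--Manin case $m=2$ (where $G(w)$ is a tree); for $m\ge 3$ the graph has cycles and $X(\Sigma_{w})$ is a non-proper toric variety. So ``unimodular, hence smooth complete'' is false, and your alternative rank-count argument (``both varieties are smooth, complete, and admit affine pavings, so Chow agrees with cohomology'') does not go through as stated for $X(\Sigma_{\mathcal G})$.

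This does not wreck the main line, but you have to justify the toric presentation differently. The Stanley--Reisner plus linear presentation
\[
A^*(X(\Sigma_{\mathcal{G}}))\;\cong\;\Z[x_\rho:\rho\in\Sigma_{\mathcal{G}}(1)]\big/\big(I_{\mathrm{SR}}+J_{\mathrm{lin}}\big)
\]
holds for \emph{any} smooth toric variety, complete or not; one clean reference is Brion's computation of equivariant Chow rings of toric varieties as piecewise polynomials on the fan, which for a unimodular fan gives $A^*_T(X(\Sigma))\cong \Z[x_\rho]/I_{\mathrm{SR}}$ and hence the displayed isomorphism after killing $\mathrm{Sym}(M)_{>0}$. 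Alternatively you can bypass this entirely with a sandwich: one always has a surjection $R(\Sigma_{\mathcal G})\twoheadrightarrow A^*(X(\Sigma_{\mathcal G}))$, Tevelev gives surjectivity of $\iota^*$, and Feichtner--Yuzvinsky identifies $A^*(\overline{Y}_{\mathcal G})$ with $R(\Sigma_{\mathcal G})$; checking that the composite $R(\Sigma_{\mathcal G})\to A^*(X(\Sigma_{\mathcal G}))\xrightarrow{\iota^*} A^*(\overline{Y}_{\mathcal G})\cong R(\Sigma_{\mathcal G})$ is the identity on generators forces both intermediate maps to be isomorphisms. Either fix is short; just don't lean on completeness of $X(\Sigma_{\mathcal G})$.
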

	In particular, the work of ~\cite{CHMR} implies that Hassett's compactification $\overline{M}_{0, w}$ is a wonderful compactification of $M_{0, w}$, seen as a hyperplane arrangement complement when $w$ is heavy/light; specifically, the associated matroid is the graphic matroid on the \textit{reduced weight graph} $G(w)$ corresponding to $w$, and the building set giving the particular compactification $\overline{M}_{0, w}$ is the building set of \textit{1-connected flats} of the graphic matroid of $G(w)$. We will use this perspective, in combination with Theorem ~\ref{ChowHA} to prove Theorem ~\ref{Main}, thus computing $A^*(\overline{M}_{0, w})$ whenever $w$ is heavy/light. \\
	
\subsection{Acknowledgements} We would like to express thanks to Dhruv Ranganathan for suggesting this project and many detailed discussions, to Melody Chan for guidance in learning tropical geometry, and to Brendan Hassett for several helpful conversations. We also thank the referees for useful comments.  
	
	\section{$M_{0, w}^\mathrm{trop}$ as the Bergman fan of a graphic matroid.}
 Our computation of the Chow ring begins with an analysis of the graphic matroid $M(w)$ obtained from the reduced weight graph $G(w)$. We begin by recalling some definitions from matroid theory.
	\subsection{Preliminaries on matroids.}
	Matroids are combinatorial objects abstracting the notion of linearly independent subsets of a vector space. For a more comprehensive treatment of matroids in algebraic geometry, the reader is invited to consult ~\cite{Katz}.
\begin{defn}
		A \textit{finite matroid} $M$ is a pair $(E, I)$ where $E$ is a finite set, called the \textit{ground set}, and $I$ is a collection of subsets of $E$, called \textit{independent sets}, satisfying
		\begin{enumerate}[(i)]
			\item $\varnothing \in I$,
			\item if $X \in I$ and $Y \subseteq X$, then $Y \in I$, and
			\item if $X, Y \in I$ with $|X| > |Y|$, then there exists $e \in X \setminus Y$ such that $Y \cup \{e \} \in I$. 
		\end{enumerate}	
\end{defn}
The easiest examples of finite matroids are constructed from finite subsets of vector spaces. If $V$ is a vector space, and we have a finite collection of vectors $E =\{v_1, \ldots, v_n \} \subseteq V$, we can take the collection $I$ of independent sets of subsets to be linearly independent subsets of $E$. Matroids which arise in this way are said to be \textit{realizable}. This procedure shows how one associates a realizable matroid to any hyperplane arrangement $\mathcal{A}$ in $\C^n$: one takes the ground set $E$ to be the set of normal vectors of the hyperplanes in $\mathcal{A}$. Just as one can define a notion of rank or dimension for subsets of vector spaces, one has a definition of a rank function for subsets of the ground set of a matroid.
	\begin{defn}
		Let $M$ be a finite matroid with ground set $E$. The \textit{rank function} of $M$ is the function $r: \mathcal{P}(E) \to \N$ that takes a subset $S \subseteq E$ to the size of the maximal independent subset of $S$.
	\end{defn}
		The rank function allows us to define a closure operator for matroids, which leads to the notion of flats of a matroid.
	\begin{defn}
		Let $M$ be a matroid on a finite set $E$, and let $A \subseteq E$. The \textit{closure} of $A$ is defined as
		\[\mathrm{cl}(A) = \{x \in E \mid r(A \cup \{x\} ) = r(A) \}. \]
		A subset $F \subseteq E$ is called a \textit{flat} if it is closed under taking the matroid closure.
	\end{defn}
	We remark that the set of flats of a matroid forms a lattice, aptly named the \textit{lattice of flats}.
	
	We now define the notion of the graphic matroid $M(G)$ for a finite graph $G$, which is a matroid whose ground set is the edge set $E(G)$. We say a subgraph of $G$ is a \textit{forest} if it is acyclic, i.e. does not contain any cycles.
	\begin{defn}
	 Given a finite graph $G$, the \textit{graphic matroid} $M(G)$ associated to $G$ has as its ground set the edge set $E(G)$ of $G$, where a subset $S$ of $E(G)$ is independent if and only if $S$ is the set of edges of some forest of $G$.
	\end{defn}
	
	\begin{rem}
	Given a finite graph $G$ with vertex set $V(G) =\{ v_1, \ldots, v_n\}$, there is an associated hyperplane arrangement $\mathcal{A}_G$ in $\C^n$. Giving $\C^n$ coordinates $z_1, \ldots, z_n$, we let $H_{ij}$ be the hyperplane defined by $z_i - z_j = 0$. Then we may define
	\[\mathcal{A}_G := \{H_{ij} \mid i,j \text{ are connected by an edge in }G \}.  \]
	Then the realizable matroid on the normal vectors of the hyperplanes in $\mathcal{A}_G$ is the same as the graphic matroid $M(G)$: linearly independent subsets correspond exactly to forests of $G$. 
	\end{rem}

	We now introduce the Bergman fan of a matroid $M$, first in terms of the \textit{chain-of-flats} subdivision.
	\begin{defn}\label{chainofflats}
	    Given a finite matroid $M$ on the ground set $E$, the 
	    \textit{Bergman fan} $B(M)$ is a polyhedral cone complex that coincides with the order complex of the lattice of flats in $M$. 
	    More precisely, given a chain of flats 
	    \[
	    F_{\bullet} = \varnothing \subsetneq F_1 \subsetneq \cdots \subsetneq F_{r-1} \subsetneq F_r = E
	    \] where $r$ is the rank of $M$ and $F_i$ is a flat of rank $i$ in $M$ for all $i = 1, \ldots, r$, a top-dimensional cone in the Bergman fan $\Sigma(M)$ is a cone in $\R^{|E|}$ spanned by rays corresponding to ${v_{F_i} = -\sum_{j \in F_i} e_j}$ for all $i = 1, \ldots, r-1$. The polyhedral structure thus obtained is called the \textit{chain-of-flats subdivision} of the Bergman fan.
	\end{defn}
	Feichtner and Sturmfels ~\cite{FS05} demonstrate that there are multiple polyhedral structures on $|B(M)|$ (multiple fans having the same support), each corresponding to a choice of \textit{building set} for the lattice of flats. 
	\begin{defn}
	        Let $\mathcal{F}$ denote the lattice of flats of a matroid $M$, and given two flats $F, F' \in \mathcal{F}$, write $[F, F']:= \{G \in \mathcal{F} \mid F \subseteq G \subseteq F' \}$. A \textit{building set} for $\mathcal{F}$ is a subset $\mathcal{G}$ of $\mathcal{F} \setminus \{\varnothing\}$ such that the following holds:
	       For any $F \in \mathcal{F} \setminus \{\varnothing\}$, let $G_1, \ldots, G_k$ be the maximal elements of $\mathcal{G}$ contained in $F$. Then there is an isomorphism of partially ordered sets:
	        \[\varphi_F: \prod_{j = 1}^{k} [\varnothing, G_j] \to [\varnothing, F], \]
	        where the $j$th component of $\varphi_F$ is the inclusion $[\varnothing, G_j] \subseteq [\varnothing, F]$.
	       
	       A subset $\mathcal{S}$ of a building set is called \textit{nested}, if for any set of incomparable elements $F_1, \ldots, F_l$ in $\mathcal{S}$ with $l \geq 2$, the join, i.e., the least upper bound, $F_1 \vee \ldots \vee F_l$ is not an element of $\mathcal{G}$.
	\end{defn}
	
	\begin{defn}
    Given a building set $\mathcal{G}$ for the lattice of flats $\mathcal{F}$ of $M$, we may now define the \textit{nested-sets subdivision} of the Bergman fan $B(M)$ as follows: for each nested set ${\mathcal{S} = \{F_1, \ldots, F_p\} \subseteq \mathcal{G}}$, we associate the cone 
    \[\sigma_{\mathcal{S}} = \text{cone}(v_{F_1}, \ldots, v_{F_p}),\]
    where the vectors $v_{F_j}$ are as defined in Definition \ref{chainofflats}. It is shown in ~\cite{FS05} that this procedure gives a polyhedral fan with support $|B(M)|$.
    \end{defn}
    In our discussion of the Bergman fan, with either the chain-of-flats subdivision or the nested-sets subdivision, we will always consider the \textit{reduced} Bergman fan.
    \begin{defn}
        Given a Bergman fan $B(M)$ containing the lineality space $L$ spanned by the vector $(1, 1, \ldots, 1)$ in a real vector space, the \textit{reduced Bergman fan $B'(M)$} is the quotient fan $B(M)/L$.
    \end{defn}
    
    We are now ready to discuss the embedding of the tropical moduli space $M_{0, w}^\trop$ as the Bergman fan of a graphic matroid.
    \subsection{The graphic matroid $M(w)$ of the reduced weight graph $G(w)$}
	The reduced weight graph $G(w)$ for a heavy/light weight vector $w = (1^{(m)}, \epsilon^{(n - m)})$ is constructed as follows. 
	\begin{defn}
	   Suppose $w = (1^{(m)}, \epsilon^{(n - m)})$ is a heavy/light weight vector. The \textit{reduced weight graph} $G(w)$ of $w$ has vertices $\{2, \ldots, n\}$, where vertex $i$ is connected to vertex $j$ if and only if $w_i + w_j > 1$.
	\end{defn}
	Thus the reduced weight graph $G(w)$ has $m-1$ vertices corresponding to heavy weights, labelled with numbers $2$ through $m$. We have $n - m$ vertices corresponding to light weights, which we label $m + 1$ through $n$. We connect with an edge any two vertices whose corresponding weights sum to greater than $1$, so the vertices $2$ through $m$ will form a complete graph $K_{m - 1}$. Then, each heavy vertex is connected to each of the light vertices. Figure \ref{fig:reduced-weight-example} depicts $G(w)$ when $w \in (\Q \cap [0, 1])^6$ has four heavy and two light weights.
	\begin{figure}[H]
		\centering
		\includegraphics[scale=0.5]{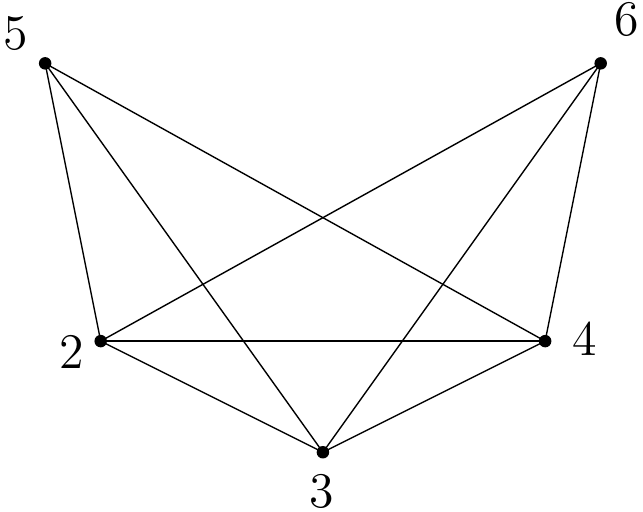}
		\caption{The reduced weight graph $G(w)$ of $w = (1^{(4)}, \epsilon^{(2)})$.}
		\label{fig:reduced-weight-example}
	\end{figure}
	For ease of notation, we put $M(w)$ for the graphic matroid of $G(w)$.
	
	\begin{defn}
	    Given a graphic matroid $M(G)$, a flat $F \subseteq E(G)$ of $M$ is said to be \textit{$1$-connected} if the subgraph of $G$ with edge set $F$ is a connected subgraph of $G$.
	\end{defn}

	\begin{thm}[\cite{CHMR}]\label{sigma0w}
	    The set $\mathcal{G}$ of $1$-connected flats of the graphic matroid $M(w)$ forms a building set for the lattice of flats of $M(w)$, and the nested-sets subdivision of $B'(M(w))$ with respect to the building set $\mathcal{G}$ embeds the cone complex $M_{0, w}^\trop$ as a balanced fan.
	\end{thm}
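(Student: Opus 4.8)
The plan is to prove the two assertions separately: that $\mathcal{G}$ is a building set, which is a direct computation with flats of a graphic matroid; and that the nested-sets subdivision of $B'(M(w))$ realizes $M_{0,w}^\trop$, which I would establish by an explicit, face-compatible bijection between cones. For the first assertion I would start from the standard dictionary for a graphic matroid $M(G)$: its flats are in bijection with partitions $\pi = \{V_1,\dots,V_k\}$ of $V(G)$ whose blocks each induce a connected subgraph, the flat being $F_\pi = \bigcup_i E(G[V_i])$, with $r(F_\pi) = |V(G)| - k$ and $M(G)|_{F_\pi} \cong \bigoplus_i M(G[V_i])$. Under this dictionary a flat is $1$-connected exactly when $\pi$ has a single non-singleton block $W$, so the $1$-connected flats are the $F_W := E(G[W])$ with $|W|\ge 2$ and $G[W]$ connected. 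Given an arbitrary flat $F = F_\pi$, the edge-containment $E(G[W]) \subseteq F_\pi$ forces a connected $W$ into a single block, so the maximal elements of $\mathcal{G}$ below $F$ are precisely the $F_{V_i}$ for non-singleton blocks; and the identification $M(G)|_F \cong \bigoplus_i M(G[V_i])$ (together with the closure computation showing flats below $F$ are exactly the $\bigsqcup_i F_i'$ with $F_i'$ a flat of $M(G[V_i])$) gives a canonical poset isomorphism $[\varnothing, F] \cong \prod_i [\varnothing, F_{V_i}]$ whose factor maps are the inclusions. This is exactly the building-set axiom.

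For the second assertion I would first make the combinatorics of $G(w)$ explicit: since $\{2,\dots,m\}$ is a clique each of whose vertices is adjacent to every light vertex $m+1,\dots,n$, while the light vertices are pairwise non-adjacent, an induced subgraph $G(w)[W]$ with $|W|\ge 2$ is connected iff $W$ contains a heavy vertex, i.e.\ iff $\sum_{i\in W} w_i > 1$; hence the $1$-connected flats of $M(w)$ other than the top flat $E$ are indexed exactly by the sets $S \subsetneq \{2,\dots,n\}$ with $\sum_{i\in S} w_i > 1$. Computing joins in the lattice of flats, two incomparable $1$-connected flats $F_S, F_{S'}$ with $S\cap S' \ne \varnothing$ have join $F_{S\cup S'} \in \mathcal{G}$, whereas the join of pairwise disjoint $1$-connected flats is a flat with at least two non-singleton blocks, hence not in $\mathcal{G}$; since a set of pairwise incomparable elements of a laminar family is automatically pairwise disjoint, it follows that a subset of $\mathcal{G}$ is nested iff the corresponding family of vertex sets is laminar (pairwise nested or disjoint). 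As $v_E$ lies in the lineality space $L$, the cones of the reduced fan $B'(M(w))$ are then indexed by laminar families of sets $S \subsetneq \{2,\dots,n\}$ with $\sum_{i\in S}w_i>1$.

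On the tropical side, a combinatorial type of $w$-stable genus-$0$ tropical curve with legs labeled $\{1,\dots,n\}$ is a leg-labeled tree satisfying the tropical analogue of Definition \ref{Trees}, its cone being $\R_{\ge 0}^{\,E(\Gamma)}$ over the set of internal edges, with faces given by edge contractions. Each internal edge $e$ cuts off a subtree whose set of legs $S_e \subseteq \{2,\dots,n\}$ (the half not containing leg $1$) satisfies $\sum_{i\in S_e} w_i > 1$, an inequality I would prove by inducting on that subtree and applying the tropical stability inequality at its vertex nearest $e$; moreover $\{S_e\}$ is laminar (two splits of a tree nest or are disjoint). Conversely, a laminar family of such sets $S$ reconstructs a unique combinatorial type, and this sets up a bijection between combinatorial types of $w$-stable tropical curves and nested sets of $\mathcal{G}\setminus\{E\}$ compatible with face maps — contracting $e$ deletes $S_e$ — sending the primitive generator of the ray of $\sigma_\Gamma$ indexed by $e$ to the generator $v_{F_{S_e}}$ of the corresponding ray of $\sigma_{\mathcal{S}}$. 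Since the rays of any cone of a Bergman fan are linearly independent modulo $L$, this data assembles into a linear embedding of each $\sigma_\Gamma$ into $\R^{E(G(w))}/L$, and globally it identifies $M_{0,w}^\trop$, cone by cone, with the nested-sets subdivision of $B'(M(w))$.

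Finally, balancedness is automatic: $M(w)$ is graphic, hence realizable, so $B'(M(w))$ is the tropicalization of the linear ideal cutting out the graphic arrangement complement and is therefore a balanced fan in each of its polyhedral structures, in particular the nested-sets one. I expect the genuinely delicate step to be the reconstruction in the previous paragraph — verifying that the tree built from a laminar family of sets with $\sum_{i\in S}w_i>1$ is again $w$-stable, in particular that vertices carrying several light legs and vertices of valence $2$ satisfy the stability inequality, and that the face-map combinatorics match exactly (with the harmless bookkeeping that $E$ is invisible in the reduced fan). An alternative route I would mention, avoiding the tree reconstruction, is to deduce the statement from the known identification $M_{0,n}^\trop = B'(M(K_{n-1}))$, using that $G(w)$ is $K_{n-1}$ with the light–light edges deleted and that Bergman fans transform under matroid deletion by the corresponding coordinate projection, which one checks agrees with the tropicalization of Hassett's reduction morphism $\rho_w$.
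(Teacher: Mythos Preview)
The paper does not give its own proof of this theorem: it is stated with the attribution \cite{CHMR} and no argument follows. There is therefore nothing in this paper to compare your proposal against directly.

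That said, your proposal is sound. The building-set verification via the partition description of flats of a graphic matroid is the standard argument and is correct as written. Your identification of $1$-connected flats with subsets $S\subsetneq\{2,\dots,n\}$ having $\sum_{i\in S}w_i>1$ is exactly Proposition~\ref{FlatsandSubsets} of the present paper, and your analysis of nested sets as laminar families matches what the paper records (without proof) in Section~\ref{descriptionofSigma0w}. The tree-reconstruction step and the verification of $w$-stability are indeed the points requiring care, as you note, but the outline is correct.

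It is worth remarking that the ``alternative route'' you mention at the end --- deducing the statement from the known identification of $M_{0,n}^\trop$ with $B'(M(K_{n-1}))$ via the coordinate projection $\pr_w$ that forgets light--light edges --- is in fact closer to how \cite{CHMR} actually proceeds, judging from the discussion in Section~4 of the present paper (which invokes $\pr_w(\Sigma_n)=\Sigma_w$ and the contraction behavior of $\pr_w$ on cones, citing Theorem~2.17 and Lemma~3.6 of \cite{CHMR}). Your primary approach, building the bijection directly from laminar families to $w$-stable combinatorial types, is more self-contained and avoids appealing to the $\overline{M}_{0,n}$ case, at the cost of the tree-reconstruction bookkeeping you flag; the projection approach trades that bookkeeping for the need to verify that $\pr_w$ is a map of fans with the claimed contraction behavior.
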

	
	Andy Fry ~\cite{Fry} has partially generalized the work of ~\cite{CHMR}, associating a tropical moduli space $M_{0, \Gamma}^{\trop}$ to an arbitrary graph $\Gamma$. When $\Gamma$ is the reduced weight graph of a weight vector $w$, one recovers the tropical Hassett space $M_{0, w}^\trop$.
	
	We will use the notation $\Sigma_{w}$ for the nested-sets subdivision of $B'(M(w))$ with respect to the building set of $1$-connected flats. In order to further describe the fan $\Sigma_{w}$, we now prove that the $1$-connected flats of $M(w)$ are parameterized by certain subsets of $\{2, \ldots, n \}$, which, as we will see in Section \ref{Divisors}, correspond bijectively to \textit{combinatorial types} of $w$-stable trees with one node.

	\begin{prop}\label{FlatsandSubsets}
		A $1$-connected flat of the graphic matroid $M(w)$ is uniquely determined by a set of heavy vertices $\{k_1, \ldots, k_\ell \}$ and a set of light vertices $\{s_1, \ldots, s_r\}$ in $G(w)$. There is thus a bijection between $1$-connected flats of $M(w)$ and subsets $S \subseteq \{2, \ldots, n\}$ with ${\sum_{i \in S} w_i > 1}$.
	\end{prop}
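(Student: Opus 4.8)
The plan is to identify the $1$-connected flats of $M(w)$ with the connected induced subgraphs of $G(w)$ on at least two vertices, and then use the explicit clique-plus-complete-bipartite shape of $G(w)$ to read off which vertex subsets occur.

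First I would recall the standard description of closure in a graphic matroid: for $A \subseteq E(G)$, the flat $\mathrm{cl}(A)$ consists of every edge of $G$ whose two endpoints lie in the same connected component of the spanning subgraph $(V(G), A)$. Hence $F$ is a flat of $M(w)$ if and only if $F = E(G(w)[C_1]) \cup \cdots \cup E(G(w)[C_k])$, where $C_1, \ldots, C_k$ are the connected components of $(V(G(w)), F)$. If moreover $F$ is $1$-connected, then the subgraph on the edges of $F$ is connected, so a single component $C_i$ already carries all of $F$; writing $W := C_i$ for the set of vertices incident to $F$, we get $F = E(G(w)[W])$ with $G(w)[W]$ connected and $|W| \ge 2$ (when $|W| \le 1$ one has $F = \varnothing$, which the building set excludes). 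Conversely, if $W \subseteq \{2, \ldots, n\}$ has $|W| \ge 2$ and $G(w)[W]$ connected, then $E(G(w)[W])$ is a flat — any edge not in it has an endpoint outside $W$, hence lies in a different component of $(V(G(w)), E(G(w)[W]))$ — and it is $1$-connected. Since $W$ is recovered from $F$ as its set of non-isolated vertices, this yields a bijection between $1$-connected flats of $M(w)$ and subsets $W \subseteq \{2, \ldots, n\}$ with $|W| \ge 2$ and $G(w)[W]$ connected.

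Next I would determine which such $W$ arise. In $G(w)$ the heavy vertices $2, \ldots, m$ form a clique, every heavy vertex is adjacent to every light vertex, and no two light vertices are adjacent. Thus if $W$ contains some heavy vertex $k$, then every other vertex of $W$ is adjacent to $k$ and $G(w)[W]$ is connected; while if $W$ consists only of light vertices, $G(w)[W]$ is edgeless and hence disconnected once $|W| \ge 2$. So the relevant $W$ are exactly those with $|W| \ge 2$ containing at least one heavy vertex, which is precisely the data of a nonempty set of heavy vertices $\{k_1, \ldots, k_\ell\} = W \cap \{2, \ldots, m\}$ together with a (possibly empty) set of light vertices $\{s_1, \ldots, s_r\} = W \cap \{m+1, \ldots, n\}$, subject only to $\ell + r \ge 2$.

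Finally I would match this condition to the weight inequality, using $w = (1^{(m)}, \epsilon^{(n-m)})$ with $\epsilon < \frac{1}{n-m}$. If $\sum_{i \in W} w_i > 1$, then since the total light weight $(n - m)\epsilon$ is already below $1$, the set $W$ must contain a heavy index, and it cannot be a single heavy index (whose weight is exactly $1$), so $|W| \ge 2$; conversely, if $W$ contains a heavy index and $|W| \ge 2$, then $\sum_{i \in W} w_i \ge 1 + \epsilon > 1$. Combining the three steps identifies the $1$-connected flats, via $F = E(G(w)[W])$, with the subsets $S = W \subseteq \{2, \ldots, n\}$ satisfying $\sum_{i \in S} w_i > 1$. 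I expect the matroid input (the graphic-matroid closure formula) to be routine, so the only care needed is bookkeeping at the boundary — excluding $\varnothing$ and singleton vertex sets consistently on both sides — and invoking $(n - m)\epsilon < 1$ so that no all-light subset can satisfy the weight inequality.
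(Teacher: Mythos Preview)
Your argument is correct and follows essentially the same route as the paper: both identify a $1$-connected flat with the edge set of the induced subgraph on its vertex set, then use the clique-plus-bipartite structure of $G(w)$ to see that the connected induced subgraphs on at least two vertices are exactly those containing a heavy vertex. Your write-up is a bit more explicit than the paper's in two places---you state the graphic-matroid closure formula at the outset, and you actually verify the equivalence between ``contains a heavy vertex and $|W|\ge 2$'' and $\sum_{i\in W} w_i > 1$ using $(n-m)\epsilon < 1$---but the underlying idea is the same.
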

	
	\begin{proof}
		Suppose we have a connected subgraph $T$ of $G(w)$ which contains vertices $k_1, \ldots, k_\ell$ and $s_1, \ldots, s_r$. Then, for any two heavy vertices $k_i$ and $k_j$, there is a path $P$ between $k_i$ and $k_j$, so adding the edge $(k_i, k_j)$ to $T$ creates a cycle and does not increase the matroid rank (i.e. the number of edges in a spanning tree), that is, we have $r(T \cup (k_i, k_j)) = r(T)$. Similarly, there is a path from $k_i$ to $s_j$, so including the edge $(k_i, s_j)$ does not increase the matroid rank of $T$. Therefore, if $E(T)$ is to be closed under the matroid closure operator, it must contain the complete graph on the vertices $k_1, \ldots, k_\ell$ and the edges $(k_i, s_j)$ for all $i$ and $j$. The described graph is the maximal subgraph of $G(w)$ containing the relevant vertex sets, and as such must be closed: adding a new edge would add a new vertex, and thus increase the size of a spanning tree. Therefore, a subset $S \subseteq \{2, \ldots, n\}$ with $\sum_{i \in S} w_i > 1$ uniquely determines a $1$ -connected flat: simply take the maximal subgraph on the vertices indicated by $S$. Conversely, given a flat $F$, we can associate the subset $S \subseteq \{2, \ldots, n \}$ consisting of the vertices in $F$, which gives the desired bijection.
	\end{proof}	
    
\subsection{Cones of the fan $\Sigma_{w}$}\label{descriptionofSigma0w} By Proposition \ref{FlatsandSubsets}, rank-one flats of $M(w)$ correspond to subsets $\{i, j\}$ of $\{2, \ldots, n\}$ with $i < j$ and $i \leq m$. To each flat of rank one, we associate a basis vector $v_{i, j}$, and quotient by the linear relation
	\[\sum_{\substack{\{i, j\} \subseteq \{2, \ldots, n \} \\ i < j,\, i \leq m }} v_{i, j} = 0. \]
	In general, to a $1$-connected flat $F_{S}$, corresponding to a subset $S$ as in Proposition \ref{FlatsandSubsets}, we associate the vector
	\[v_{F_{S}} = \sum_{\substack{\{i, j \} \subseteq S \\ i < j,\, i \leq m}} v_{i, j}. \]
	Note that when we consider $S = \{2, \ldots, n\}$, $v_{F_{S}}$ lives in the lineality space, and because we take a quotient by the lineality space in the construction of $\Sigma_{w}$, it suffices to consider subsets $S \subsetneq \{2, \ldots, n\}$. Because $\Sigma_{w}$ is defined as the nested-sets subdivision of the Bergman fan of the graphic matroid associated to $G(w)$, it has the cone 
	\[\sigma_{F_{S_1}, \ldots, F_{S_k}} = \operatorname{cone}(v_{F_{S_1}}, \ldots, v_{F_{S_k}})  \]
	whenever $F_{S_1}, \ldots, F_{S_k}$ form a nested set within the building set of $1$-connected flats. As discussed in ~\cite{CHMR}, nested subsets in this context are either collections of flats which are pairwise vertex disjoint, or collections of flats that form chains with respect to inclusion. Now that we have described the fan $\Sigma_w$, we are ready to prove our main theorem.

\section{Computing the Chow ring from the fan $\Sigma_{w}$.}\label{Divisors}
		Let $\overline{M}_{0, w}$ be a heavy/light Hassett space with at least two heavy weights and two light weights. We continue to assume without loss of generality that $w = (1^m, \epsilon^{n - m})$ where $\epsilon < \frac{1}{n - m}$. In this section we prove Theorem ~\ref{Main}, restated here for the reader's convenience.
		\begin{customthm}{1.3}
		Let $m \geq 2$ and $n \geq 4$, and suppose that $w$ is a heavy/light weight vector, with $m$ heavy and $(n - m)$ light weights. Then the Chow ring of $\overline{M}_{0, w}$ is given as follows:
		\[A^*(\overline{M}_{0, w}) = \frac{\Z\left[D^S \mid S \subsetneq \{2, \ldots, n \},\, \sum_{i \in S} w_i > 1\right]}{\langle\text{the following relations}\rangle} \]
		\begin{enumerate}
			\item $D^S D^T = 0$ unless one of the following hold: $S \subseteq T$, $T \subseteq S$, $S \cap T = \varnothing$.
			\item For any pair of two-element subsets $\{i, j\}, \{k, \ell\} \subseteq \{2, \ldots, n \}$ with $i, k \leq m$, we have the linear relation
			\[ \sum_{\substack{S \not\supseteq \{k, \ell \}\\ S \supseteq \{i, j\}}} D^S = \sum_{\substack{S \supseteq \{k, \ell\} \\ S \not\supseteq \{ i, j \}}} D^S. \]
		\end{enumerate}
	\end{customthm}
	To prove Theorem ~\ref{Main}, we use our description of the fan $\Sigma_{w}$ together with the results of Theorem \ref{TropicalCompactificationM0w} and Theorem \ref{ChowHA} to first compute the Chow ring of $\overline{M}_{0, w}$ in terms of $1$-connected flats of the graphic matroid $M(w)$. This is the content of the following theorem.
	
	\begin{proof}
	The fan $\Sigma_{w}$ coincides with the fan $\Sigma_{\mathcal{G}}$ in the notation of Theorem \ref{ChowHA}, where $\mathcal{G}$ is the building set of $1$-connected flats for the graphic matroid $M(w)$. Therefore the Chow ring of $\overline{M}_{0, w}$ coincides with that of the toric variety $X(\Sigma_{w})$. Given our explicit description of the fan $\Sigma_{w}$ in Section \ref{descriptionofSigma0w}, the Chow ring will be a quotient of the polynomial ring
		\[\Z\left[D^S \mid S \subsetneq \{2, \ldots, n \}, \sum_{i \in S} w_i > 1 \right] \]
		by an ideal of relations; see ~\cite{Fulton} or ~\cite{cox} for a treatment of toric intersection theory. The multiplicative relations, or \textit{Stanley Reisner relations} (cf. Theorem 6.7.1, ~\cite{MStropical}) are given by setting products of the $D^S$ equal to $0$ whenever the corresponding $v_{F_{S}}$ do not span a cone. It is then immediate that the multiplicative relations are given by setting all pairs $D^S D^T = 0$, unless $S \cap T = \varnothing$, $S \subseteq T$, or $T \subseteq S$ (i.e., all the possibilities that would lead to $\{F_S, F_T \}$ forming a nested subset of $\mathcal{G}$).\\
		\\
		Computing the linear relations is a more delicate exercise. Note that because we mod out by the lineality space in the construction of $\Sigma_{w}$, the dimension of our vector space is one less than the number of two-element subsets of $\{2, \ldots, n\}$ containing at least one heavy weight; therefore, for each $\{k, \ell\} \subseteq \{2, \ldots, n\}$ such that $k \leq m$, $k< j$, the set
		\[ \mathcal{B}_{k, \ell} = \{v_{i, j} \mid \{i, j\} \subseteq \{2, \ldots, n \},\, i \leq m, i < j, \{i, j \} \neq \{k,\ell\} \} \]
		forms a basis for the ambient space. Then, for each $v \in \mathcal{B}_{k, \ell}$, we have the linear relation
		\[ \sum_{S} \langle v , v_{F_{S}}  \rangle D^S = 0, \]
		where the sum is taken over all $S \subsetneq \{2, \ldots, n \}$ with $\sum_{i \in S} w_i > 1$. For a subset $S \subsetneq \{2, \ldots, n \}$ with $\sum_{i \in S} w_i > 1$ corresponding to a flat, we can write
		\[v_{F_{S}} = \sum_{\substack{\{i, j \} \subseteq S, \\ i < j, \\i \le m}} v_{i, j} = -\sum_{\substack{\{ i,j\} \not\subseteq S,\\ i < j \\ i\le m}} v_{i, j}. \]
		The second equality is obtained because \[\sum_{\substack{\{i, j\} \subseteq \{2, \ldots, n\}\\ i < j \\ i\le m }} v_{i, j} = 0.\]
		For each basis vector $v_{i,j} \in \mathcal{B}_{k, \ell}$ with $i \leq m$, $i < j$, we have
		\begin{align*}
		0 = \sum_{S} \langle v_{i, j}, v_{F_S}\rangle D^S &= \sum_{\substack{S \not\supseteq \{k, \ell\}}}\langle v_{i, j}, v_{F_S}\rangle D^S + \sum_{\substack{S \supseteq \{k, \ell\}}}\langle v_{i, j}, v_{F_S}\rangle D^S \\&= \sum_{\substack{S \not\supseteq \{k, \ell\}, \\ S \supseteq\{i, j\}}} D^S - \sum_{\substack{S \supseteq \{k, \ell\}, \\ S \not\supseteq\{i, j\}}} D^S.
		\end{align*}
		This completes the proof of Theorem \ref{Main}.
	\end{proof}
		\subsection{Presentation of $\Sigma_{w}$ using combinatorial types of $w$-stable trees}
	\label{sec:combo-rep}
	While Theorem ~\ref{Main} computes the Chow ring for any heavy/light Hassett space, the presentation may seem unwieldy at first, in that it is in terms of subsets of $\{2, \ldots, n\}$ satisfying some restrictions. However, we may view these subsets as specifying \textit{combinatorial types} of $w$-stable trees with one node.
	\begin{defn}
	Let $w \in \left(\Q \cap(0, 1]\right)^n$, and suppose $C$ is a $w$-stable tree as in Definition \ref{Trees}. Then the \textit{combinatorial type}, or \textit{dual graph}, of $C$ is a graph that has a vertex for each irreducible component of $C$, an edge between two vertices when the corresponding components share a node, and labelled rays recording the distribution of the marked points across the components.
	\end{defn}
	Given a flat $F_S$, where $S \subseteq \{2, \ldots, n \}$ satisfies $|S|\geq 2$ and $S \cap \{2, \ldots, m \} \neq \varnothing$, we assign a $w$-stable dual graph as follows: take the complete graph $K_2$, and then to one vertex of the edge attach $|S|$ rays labelled with the elements of $S$, and then to the other edge attach $|S^c| + 1$ rays, labelled with the elements of $S^c$ and $1$. We illustrate this in Figure \ref{dualgraph}, for $w = (1, 1, \epsilon, \epsilon, \epsilon)$ and $S = \{2, 3\}$.
	\begin{figure}[h]
		\centering
		\includegraphics[scale=0.6]{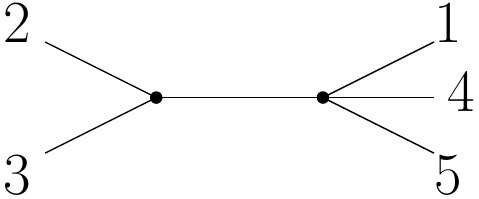}
		\caption{The combinatorial type of $(1^2, \epsilon^3)$-stable tree corresponding to the subset $S = \{2, 3\}$.}
		\label{dualgraph}
	\end{figure}
	There is thus a bijection between $1$-connected flats of $G(w)$ and combinatorial types of $w$-stable trees with precisely one node; these correspond to the codimension-$1$ boundary strata of $\overline{M}_{0, w}$. We can also parameterize $w$-stable combinatorial types by subsets $T \subseteq \{1, \ldots, n \}$ with
	$\sum_{i \in T} w_i > 1 \quad \mbox{and}\quad \sum_{i \in T^c} w_i > 1$,
	where the combinatorial type corresponding to a subset $T$ has a dual graph with rays labelled by elements of $T$ attached to one vertex, and rays corresponding to elements of $T^c$ attached to the other vertex. With this perspective, we note that the tree determined by $T$ is the same as the tree determined by $T^c$.
	
	\subsection{Example: Chow ring of Losev-Manin space $\overline{M}_{0, w}$ with $w = (1^2, \epsilon^3)$} When ${w = (1, 1, \epsilon, \ldots, \epsilon)}$, the Hassett space $\overline{M}_{0, w}$ is called the \textit{Losev-Manin} moduli space, introduced in ~\cite{LosevManin}. We will now compute the Chow ring of the Losev-Manin space $\overline{M}_{0, w}$ with weight vector $(1^2, \epsilon^3)$.
	    The reduced weight graph $G(w)$ is the three-edge star in Figure \ref{fig:losev-manin5-reduced-graph}.
	    
	    The $1$-connected flats of the graphic matroid $M(w)$ are all the copies of $1$-edge and $2$-edge subtrees of $G(w)$, as shown in Table \ref{tab:losev-manin-flats}. 
	    
	    \begin{figure}[H]
	    \begin{center}
	    \begin{tikzpicture}[scale=1.2]
		\draw[black, thin] (0, 1) -- (1, 1);
		\draw[black, thin] (0, 0) -- (0, 1);
		\draw[black, thin] (0, 1) -- (1, 0);
		\filldraw[black] (1,0) circle (1pt) node[anchor=north] {$4$};
		\filldraw[black] (0,0) circle (1pt) node[anchor=north] {$3$};
		\filldraw[black] (0,1) circle (1pt) node[anchor=south] {$2$};
		\filldraw[black] (1,1) circle (1pt) node[anchor=south] {$5$};
	    \end{tikzpicture}
	    \end{center}
	    \caption{The reduced weight graph $G(w)$ with weight vector $w = (1, 1, \epsilon, \epsilon, \epsilon)$.}
	    \label{fig:losev-manin5-reduced-graph}
        \end{figure}
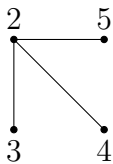
	    
	    \begin{table}[H]
		\begin{tabular}{ |c|c|c|c|} 
		\hline
 		rank 1 & $F_1$ & $F_2$ & $F_3$  \\ \hline
 		& 
		\begin{tikzpicture}
		\draw[black, thin] (0, 1) -- (0, 0);
		\filldraw[black] (1,0) circle (1pt) node[anchor=north] {$4$};
		\filldraw[black] (0,0) circle (1pt) node[anchor=north] {$3$};
		\filldraw[black] (0,1) circle (1pt) node[anchor=south] {$2$};
		\filldraw[black] (1,1) circle (1pt) node[anchor=south] {$5$};
		\end{tikzpicture}
		& 
		\begin{tikzpicture}
		\draw[black, thin] (0, 1) -- (1, 0);
		\filldraw[black] (1,0) circle (1pt) node[anchor=north] {$4$};
		\filldraw[black] (0,0) circle (1pt) node[anchor=north] {$3$};
		\filldraw[black] (0,1) circle (1pt) node[anchor=south] {$2$};
		\filldraw[black] (1,1) circle (1pt) node[anchor=south] {$5$};
		\end{tikzpicture}
		& 
		\begin{tikzpicture}
		\draw[black, thin] (0, 1) -- (1, 1);
		\filldraw[black] (1,0) circle (1pt) node[anchor=north] {$4$};
		\filldraw[black] (0,0) circle (1pt) node[anchor=north] {$3$};
		\filldraw[black] (0,1) circle (1pt) node[anchor=south] {$2$};
		\filldraw[black] (1,1) circle (1pt) node[anchor=south] {$5$};
		\end{tikzpicture}
		\\
		\hline
 		rank 2 & $F_4$ & $F_5$ & $F_6$ \\ \hline
 		& 
		\begin{tikzpicture}
		\draw[black, thin] (0, 1) -- (1, 1);
		\draw[black, thin] (0, 1) -- (0, 0);
		\filldraw[black] (1,0) circle (1pt) node[anchor=north] {$4$};
		\filldraw[black] (0,0) circle (1pt) node[anchor=north] {$3$};
		\filldraw[black] (0,1) circle (1pt) node[anchor=south] {$2$};
		\filldraw[black] (1,1) circle (1pt) node[anchor=south] {$5$};
		\end{tikzpicture}
		& 
		\begin{tikzpicture}
		\draw[black, thin] (0, 1) -- (1, 1);
		\draw[black, thin] (0, 1) -- (1, 0);
		\filldraw[black] (1,0) circle (1pt) node[anchor=north] {$4$};
		\filldraw[black] (0,0) circle (1pt) node[anchor=north] {$3$};
		\filldraw[black] (0,1) circle (1pt) node[anchor=south] {$2$};
		\filldraw[black] (1,1) circle (1pt) node[anchor=south] {$5$};
		\end{tikzpicture}
		& 
		\begin{tikzpicture}
		\draw[black, thin] (0, 1) -- (0, 0);
		\draw[black, thin] (0, 1) -- (1, 0);
		\filldraw[black] (1,0) circle (1pt) node[anchor=north] {$4$};
		\filldraw[black] (0,0) circle (1pt) node[anchor=north] {$3$};
		\filldraw[black] (0,1) circle (1pt) node[anchor=south] {$2$};
		\filldraw[black] (1,1) circle (1pt) node[anchor=south] {$5$};
		\end{tikzpicture}
		\\
		\hline
		\end{tabular}
		\caption{The $1$-connected flats of the graphic matroid $M(G(w))$ with $w = (1^2, \epsilon^3)$.}
		\label{tab:losev-manin-flats}
        \end{table}
	    
	    Now we have the lattice of flats of $M(w)$ that contains six chains of flats 
	    \[\varnothing \subsetneq F_i \subsetneq F_j \subsetneq E,\]
	    such as $\varnothing \subsetneq F_1 \subsetneq F_6 \subsetneq E$. 
	    These chains correspond to six top-dimensional cones $C_{ij}$ spanned by $v_{F_i}$ and $v_{F_j}$. 
	    Assigning the basis elements $-e_k$ to the rank-one flats $F_{k}$ for all $k = 1, \ldots, |E|$, 
	    and modding out the relation that $e_1 + e_2 + e_3 = 0$, 
	    we obtain the reduced Bergman fan $\Sigma_{w}$ in Figure \ref{fig:losev-manin-bergman} embedded in $\R^2$.
	    We have the presentation of $\Sigma_{w}$ using $w$-stable trees in Figure \ref{fig:losev-manin-combo-rep}.
	    By the Orbit-Cone Correspondence (Theorem 3.2.6, \cite{cox}) of toric divisor theory, $1$-dimensional rays $\rho^{T}$ of $\Sigma_{w}$ for $T \subseteq \{2, \ldots, n\}$ as in Theorem \ref{Main} correspond to the divisors $D^{T}$ in $X(\Sigma_{w})$.
	    Furthermore, 
	    the relations in Theorem \ref{Main} (1) are equivalent to saying that $D^S D^T = 0$ whenever $\rho^{S}$ and $\rho^{T}$ do not span a $2$-dimensional cone in $\Sigma_{w}$. Hence, reading off the nine pairs of non-adjacent one-dimensional rays, we obtain the following Stanley-Reisner relations: 
	    \begin{align*}
	        &D^{\{2, 3\}} D^{\{2, 4\}} = D^{\{2, 3\}} D^{\{2,4,5\}} = D^{\{2, 3\}} D^{\{2, 5\}} = 0, \\
	        &D^{\{2,3,5 \}} D^{\{2,4,5\}} = 
	        D^{\{2,3,5\}} D^{\{2, 4\}} = 
	        D^{\{2,3,5\}} D^{\{2,3,4\}} = 0, \\
	        &D^{\{2, 5\}} D^{\{2, 4\}} = 
	        D^{\{2, 5\}} D^{\{2,3,4\}} =
	        D^{\{2, 5\}} D^{\{2, 3\}} = 0.
	    \end{align*}
	    The linear relations in Theorem \ref{Main} (2) are
	    \begin{align*}
	        D^{\{2, 3\}} + D^{\{2, 3, 5\}} &= D^{\{2, 4\}} + D^{\{2,4,5\}}, \\
	        D^{\{2,3\}} + D^{\{2,3,4\}} &= D^{\{2, 5\}} + D^{\{2,4,5\}}, \\
	        D^{\{2,4\}} + D^{\{2,3,4\}} &= D^{\{2,5 \}} + D^{\{2,3,5\}}.
	    \end{align*}
	    Therefore, the Chow ring of $\overline{M}_{0, w}$ is 
	    \[
	    A^{\ast}(\overline{M}_{0, w}) \cong \frac{\Z[D^S \mid S \subsetneq \{2, \ldots, 5\}, \sum_{i \in S} w_i > 1 ]}{\langle \text{Stanley-Reisner relations and linear relations} \rangle}.
	    \]
	    One may verify that this coincides with the standard presentation of the Chow ring of $\P^2$ blown up at three torus-invariant points.

        \begin{figure}[H]
        \centering
        \includegraphics[scale=0.9]{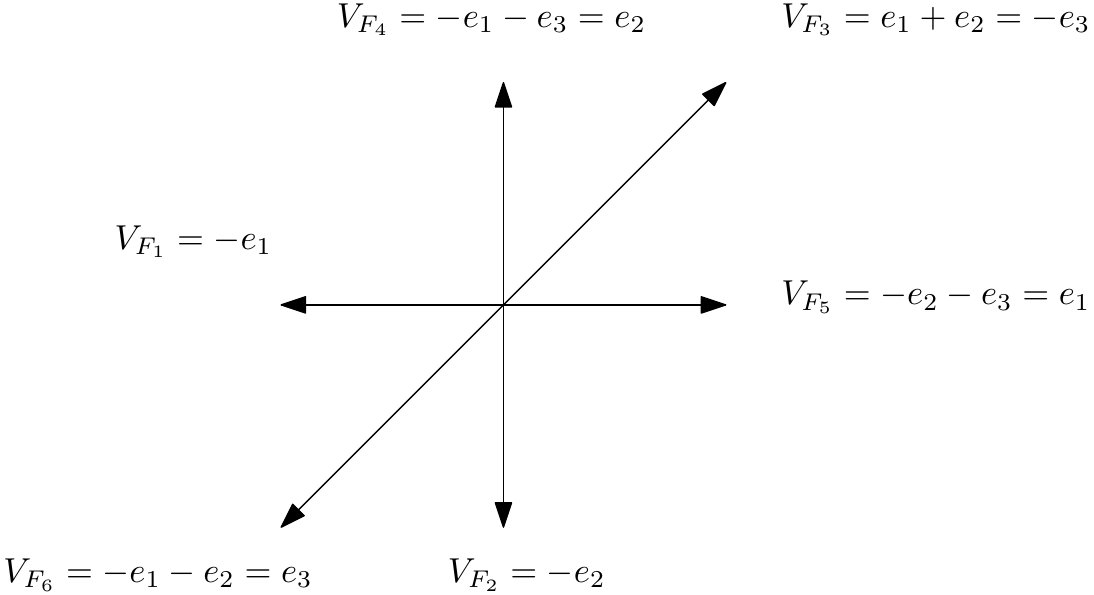}
        \caption{The reduced Bergman fan $\Sigma_{(1^2, \epsilon^{3})}$ embedded in $\R^{2}$ after taking the quotient by the lineality space.}
        \label{fig:losev-manin-bergman}
        \end{figure}
        
        \begin{center}
        \begin{figure}[H]
        \centering
        \includegraphics[scale=0.9]{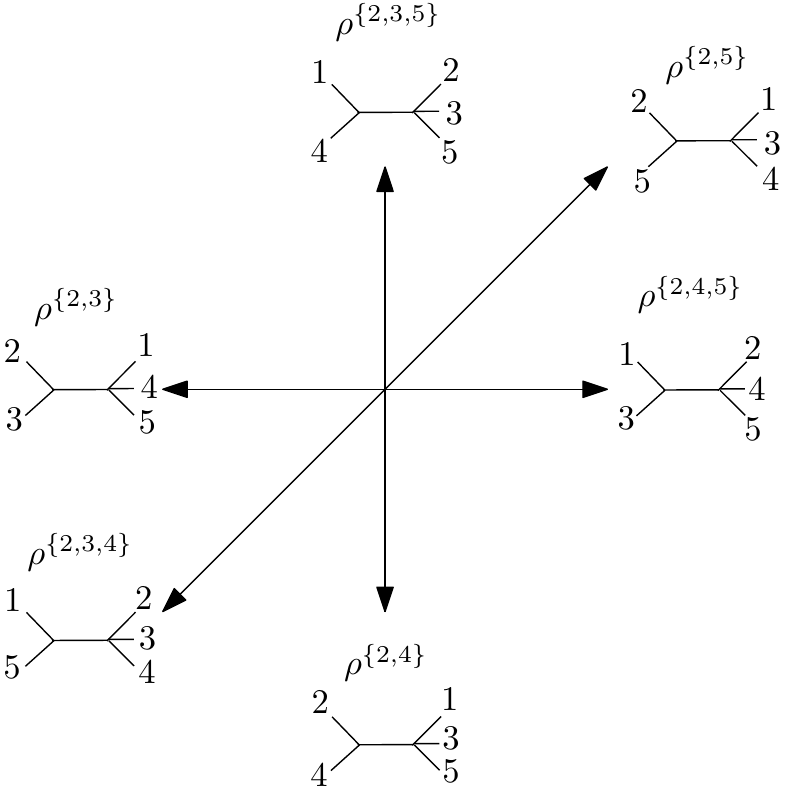}
        \caption{The presentation of $\Sigma_{(1^2, \epsilon^{3})}$ using combinatorial types of $w$-stable trees described in Section \ref{sec:combo-rep}.}
        \label{fig:losev-manin-combo-rep}
        \end{figure}
        \end{center}
	    
\section{Birational reduction morphisms}
In ~\cite{Hassett}, Hassett defines natural birational reduction morphisms $\rho_{w, w'}: \overline{M}_{0, w} \to \overline{M}_{0, w'}$ whenever $w = (w_i), w' = (w_i')$ satisfy $w_i' \leq w_i$ for all $i$; this morphism is an inclusion ${M_{0, w} \hookrightarrow M_{0, w'}}$ on the level of smooth loci, and for a pair $(C, \sum_{i} w_i P_i) \in \overline{M}_{0, w} \setminus M_{0, w}$, the morphism $\rho_{w, w'}$ collapses those components of $C$ along which $K_C + \sum_{i} w_i' P_i$ fails to be an ample divisor (more concretely, $\rho_{w, w'}$ collapses the components $T$ of $C$ for which the sum of the weights of points on $T$ together with the number of nodes on $T$ is not more than two). On the other hand, the fan $\Sigma_{n}$ is embedded in $\R^{\binom{n}{2} - n}$, with rays corresponding to corresponding to $w$-stable combinatorial types for $w = (1^{(n)})$, as discussed in the previous section. When $w = (1^{(m)}, \epsilon^{(n - m)})$ there is a natural projection \[\pr_w: \R^{\binom{n}{2} - n} \to  \R^{\binom{n}{2}-\binom{n - m}{2} - n}\]
which projects away those rays corresponding to combinatorial types becoming unstable with respect to the weight vector $w$; see ~\cite{CHMR} for an in-depth discussion. In particular, it is shown in ~\cite{CHMR} that $\pr_w(\Sigma_{n}) = \Sigma_{w}$, and that $\pr_w$ is a morphism of fans, therefore inducing a toric morphism of toric varieties $X(\pr_w): X(\Sigma_{n}) \to X(\Sigma_{w})$. In this section we prove the following theorem, by proving that $X(\pr_w)$ is an extension of Hassett's birational reduction morphism $\rho_w: \overline{M}_{0, n} \to \overline{M}_{0, w}$ to the ambient toric varieties.
\begin{thm}\label{SubringTheorem}
	Let $w$ be a heavy/light weight vector, and let $\rho_w: \overline{M}_{0, n} \to \overline{M}_{0, w}$ denote the corresponding birational reduction morphism. Then the pullback
	\[\rho_w^*: A^*(\overline{M}_{0, w}) \to A^*(\overline{M}_{0, n}) \]
	identifies the Chow ring of $\overline{M}_{0, w}$ with the subring of $A^*(\overline{M}_{0, n})$ generated by the divisor classes of $w$-stable trees.
\end{thm}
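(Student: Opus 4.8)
The plan is to realize $\rho_w$ as the restriction of the toric morphism $X(\pr_w)\colon X(\Sigma_{n})\to X(\Sigma_{w})$ to the wonderful compactifications sitting inside, and then to read off the image of $\rho_w^*$ from a computation with cones. Write $\iota_n\colon\overline{M}_{0,n}\hookrightarrow X(\Sigma_{n})$ and $\iota_w\colon\overline{M}_{0,w}\hookrightarrow X(\Sigma_{w})$ for the closed embeddings furnished by \cite{GMequations} and Theorem~\ref{TropicalCompactificationM0w}. The first, and main, step is to prove that the square
\[
\begin{tikzcd}
\overline{M}_{0,n}\arrow[r,"\iota_n"]\arrow[d,"\rho_w"'] & X(\Sigma_{n})\arrow[d,"X(\pr_w)"]\\
\overline{M}_{0,w}\arrow[r,"\iota_w"'] & X(\Sigma_{w})
\end{tikzcd}
\]
commutes. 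For this I would first note that $X(\pr_w)(\overline{M}_{0,n})$ is closed, since $\overline{M}_{0,n}$ is proper, and that on the dense torus the morphism $X(\pr_w)$ is dual to $\pr_w$, which by the constructions of \cite{GMequations} and \cite{CHMR} is the coordinate projection forgetting the Pl\"ucker-type coordinates indexed by pairs of light markings; restricted to $M_{0,n}$ this is exactly Hassett's open inclusion $M_{0,n}\hookrightarrow M_{0,w}$. Hence $X(\pr_w)(\overline{M}_{0,n})$ is the closure of $M_{0,w}$ in $X(\Sigma_{w})$, which equals $\overline{M}_{0,w}$ by Theorem~\ref{TropicalCompactificationM0w}, and $X(\pr_w)\circ\iota_n$ and $\iota_w\circ\rho_w$ are two morphisms $\overline{M}_{0,n}\to\overline{M}_{0,w}$ that agree on the dense reduced subscheme $M_{0,n}$, hence coincide since $\overline{M}_{0,w}$ is separated. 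I expect this identification of $X(\pr_w)|_{\overline{M}_{0,n}}$ with $\rho_w$ — matching Hassett's modular contraction of weight-unstable components with the behavior of $\pr_w$ on boundary strata — to be the part demanding the most care; everything after it is formal.

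Granting the square, functoriality of pullback gives $\rho_w^*\circ\iota_w^*=\iota_n^*\circ X(\pr_w)^*$, and by Theorem~\ref{ChowHA} both $\iota_n^*$ and $\iota_w^*$ are ring isomorphisms, so $\operatorname{im}(\rho_w^*)=\iota_n^*\bigl(\operatorname{im}(X(\pr_w)^*)\bigr)$. It therefore suffices to describe the image of the toric pullback. Since $A^*(X(\Sigma_{w}))$ is generated by the toric divisor classes $D^S$ with $S$ a $w$-stable tree type, I would compute $X(\pr_w)^*D^S$ from the explicit fans of Section~\ref{descriptionofSigma0w}: one checks that $\pr_w$ carries the ray $v_{F_T}$ of $\Sigma_{n}$ to the ray $v_{F_T}$ of $\Sigma_{w}$ when $T$ contains a heavy vertex — equivalently, when $T$ is $w$-stable — and to the origin when $T$ consists only of light vertices. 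Because the support function of the Cartier divisor $D^S$ on the smooth complete toric variety $X(\Sigma_{w})$ takes the value $-1$ on $v_{F_S}$ and vanishes on every other ray and at the origin, composing it with $\pr_w$ yields $X(\pr_w)^*D^S=D^S$, the toric divisor of the ray $v_{F_S}$ of $\Sigma_{n}$ (itself a $w$-stable tree type, since $\sum_{i\in S}w_i>1$ forces $S$ to contain a heavy vertex). Hence $\operatorname{im}(X(\pr_w)^*)$ is precisely the subring of $A^*(X(\Sigma_{n}))$ generated by the $D^S$ with $S$ a $w$-stable tree type, and applying $\iota_n^*$, which sends each such toric $D^S$ to the corresponding boundary divisor class of $\overline{M}_{0,n}$, identifies $\operatorname{im}(\rho_w^*)$ with the subring of $A^*(\overline{M}_{0,n})$ generated by the divisor classes of $w$-stable trees.

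Finally, $\rho_w^*$ is injective: as $\rho_w$ is a birational morphism of smooth projective varieties, $\rho_{w*}[\overline{M}_{0,n}]=[\overline{M}_{0,w}]$, and the projection formula gives $\rho_{w*}\circ\rho_w^*=\operatorname{id}$. Thus $\rho_w^*$ is a ring isomorphism onto its image, which by the previous paragraph is the subring of $A^*(\overline{M}_{0,n})$ generated by the classes of $w$-stable trees, as claimed.
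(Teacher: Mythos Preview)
Your proposal is correct and follows essentially the same route as the paper: establish the commuting square with the ambient toric varieties, invoke the Chow-ring isomorphisms of Theorem~\ref{ChowHA}, and read off the image of the toric pullback. The only notable differences are that the paper justifies the behavior of $X(\pr_w)^*$ by citing \cite[Theorem~2.17]{CHMR} on which cones $\pr_w$ contracts, whereas you compute directly with support functions, and the paper deduces injectivity at the toric level while you obtain it cleanly from the projection formula for $\rho_w$; both are valid and the overall architecture is the same.
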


We first prove an intermediate lemma.
 
\begin{lem}
	The following diagram commutes, where $\pr_w: \R^{\binom{n}{2} - n } \to \R^{\binom{n}{2} - \binom{n - m}{2} - n}$ is the projection, and the maps $\iota, \iota_w$ are the inclusions.
	\[\begin{tikzcd}
		&\overline{M}_{0, n} \arrow[d, "\rho_w"] \arrow[r, "\iota"] & X(\Sigma_{n}) \arrow[d, "X(\pr_w)"]\\
		&\overline{M}_{0, w} \arrow[r, "\iota_w"] &X(\Sigma_{w})
	\end{tikzcd}\]
\end{lem}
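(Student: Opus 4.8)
The plan is to verify the asserted commutativity $\iota_w \circ \rho_w = X(\pr_w) \circ \iota$ on the dense open subscheme $M_{0, n} \subseteq \overline{M}_{0, n}$, and then to upgrade it to an equality of morphisms on all of $\overline{M}_{0, n}$. For the upgrade, observe that both composites are morphisms from the reduced, irreducible variety $\overline{M}_{0, n}$ into the toric variety $X(\Sigma_{w})$, which is separated; since the two morphisms agree on a dense open subset, their equalizer is a closed subscheme containing a topologically dense subset of a reduced scheme, hence equals $\overline{M}_{0, n}$. So everything reduces to understanding the four maps after restriction to $M_{0, n}$.

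Next I would unwind each side over $M_{0, n}$. By Hassett's construction of the reduction morphism (recalled at the start of this section), $\rho_w$ restricts on smooth loci to the open immersion $j \colon M_{0, n} \hookrightarrow M_{0, w}$ identifying $M_{0, n}$ with the locus of $w$-stable smooth curves whose $n$ marked points are pairwise distinct; hence the left-then-bottom composite restricts to $\iota_w \circ j$, which is simply the restriction to $M_{0, n}$ of the torus embedding $\iota_w \colon M_{0, w} \hookrightarrow T_w$. On the other hand, $\iota(M_{0, n})$ lies in the dense torus $T^{\binom{n}{2}}/T^{n}$ of $X(\Sigma_{n})$, and $X(\pr_w)$ carries this torus to $T_w$ via the homomorphism of tori $\phi$ induced by the lattice projection underlying $\pr_w$; so the top-then-right composite restricts to $\phi \circ (\iota|_{M_{0, n}})$.

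The remaining point is that $\iota_w \circ j = \phi \circ \iota$ as maps $M_{0, n} \to T_w$, and this is essentially the defining property of the embedding $\iota_w$ in \cite{CHMR}. In the Grassmannian model $M_{0, n} = \mathrm{Gr}^{0}(2, n)/T^{n}$, the embedding $\iota$ records the class of the Pl\"ucker coordinates $(p_{ij})_{\{i, j\}}$; the projection $\pr_w$ forgets precisely the coordinates $p_{ij}$ indexed by pairs $\{i, j\}$ of light indices (this accounts for the $\binom{n-m}{2}$ in the dimension count of $T_w$); and $\iota_w$ records exactly the surviving coordinates $(p_{ij})$ with at least one of $i, j$ heavy, which remain invertible on all of $M_{0, w}$ because a heavy marked point stays distinct from every other marked point even when light points collide. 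Comparing the two descriptions over $M_{0, n}$ yields $\iota_w \circ j = \phi \circ \iota$ as an identity of coordinate maps, which completes the proof.

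I expect the only genuine work to be bookkeeping: pinning down compatible conventions for the two torus embeddings of \cite{GMequations} and \cite{CHMR} and for the projection $\pr_w$ (which Pl\"ucker coordinates are dropped, how the quotients by $T^{n}$ are normalized) so that $\iota_w \circ j = \phi \circ \iota$ becomes a literal equality, and confirming from Hassett's description of $\rho_w$ that it restricts to the claimed open immersion on $M_{0, n}$. Neither step is a conceptual obstacle; the content of the lemma is precisely that the two geometric tropicalizations were set up compatibly along the reduction.
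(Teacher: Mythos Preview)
Your proposal is correct and follows essentially the same approach as the paper: both arguments reduce to checking commutativity over the dense open $M_{0,n}$ via the explicit Pl\"ucker-coordinate description of the torus embeddings, invoking \cite{CHMR} for the compatibility between the coordinate projection on tori and the fan projection $\pr_w$. The only cosmetic difference is that you make the ``equal on a dense open of a reduced source into a separated target'' step explicit at the outset, whereas the paper leaves this implicit in the passage from the torus square to the compactified square.
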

\begin{proof}
	 The embedding $\overline{M}_{0, n} \hookrightarrow X(\Sigma_{n})$ is defined by taking the closure of the inclusion $M_{0, n} \hookrightarrow T^{\binom{n}{2}}/T^n$ induced by the Pl{\"u}cker embedding of the Grassmannian $\mathrm{Gr}(2, n)$. Concretely, we identify $M_{0, n}$ by the quotient of the open subset $U \subseteq \mathrm{Gr}(2, n)$ specified by the nonvanishing of Pl{\"u}cker coordinates. An $n$-tuple of distinct points $([z_{01}: z_{11}], \ldots, [z_{0n}: z_{1n}])$ in $\P^1$ can be encoded in the $2 \times n$ matrix
	 \begin{equation}\label{MatrixDescription} \begin{pmatrix}
 		z_{01} &z_{02}\cdots  &z_{0n} \\
 		z_{11} &z_{12}\cdots  &z_{1n}
	 \end{pmatrix},
	 \end{equation}
	 up to the action of $T^n$, which acts via right-multiplication by diagonal $n \times n$ matrices. The matrix has nonvanishing Pl{\"u}cker coordinates precisely because the points are distinct. Therefore, taking Pl{\"u}cker coordinates gives the embedding \[M_{0, n} \hookrightarrow T^{\binom{n}{2}}/T^n = T^{\binom{n}{2} - n} \]
	 The reader may consult ~\cite{GMequations} for a detailed discussion of this embedding.
	  The embedding $\overline{M}_{0, w} \hookrightarrow X(\Sigma_{w})$ can be concretely described: to get a matrix description as in (\ref{MatrixDescription}) for a point of $M_{0, w}$, we allow the Pl{\"u}cker coordinate corresponding to the minor indexed by columns $i$ and $j$ to vanish if and only if $w_i + w_j < 1$, tracking that these points are allowed to coincide. By forgetting the corresponding coordinates in $T^{\binom{n}{2} - n}$, we get a torus $T^{\binom{n}{2}-\binom{n-m}{2} - n}$, and we let \[\Pr_{w}: T^{\binom{n}{2} - n} \to T^{\binom{n}{2}-\binom{n-m}{2} - n}\] 
	  denote the projection map. In ~\cite{CHMR} it is proven that taking the nonvanishing Pl{\"u}cker coordinates of the matrix description of a point in $M_{0, w}$ gives an embedding \[M_{0, w} \hookrightarrow T^{\binom{n}{2}-\binom{n-m}{2} - n}.\] We claim this gives rise to a commuting square
	\[\begin{tikzcd}
	&M_{0, n} \arrow[d, "\rho_w"] \arrow[r] & T^{\binom{n}{2} - n} \arrow[d, "\Pr_w"]\\
	&M_{0, w} \arrow[r] &T^{\binom{n}{2}-\binom{n-m}{2} - n}
	\end{tikzcd}.\]
	Indeed, $\rho_w: M_{0, n} \to M_{0, w}$ is simply the inclusion of one smooth locus into the other, corresponding to viewing a $2 \times n$ matrix $A$ with nonvanishing minors as a $2 \times n$ matrix where the minor indexed by $(i, j)$ may vanish if and only if $w_i + w_j < 1$. Now to finish the proof, we need only show that the map $\Pr_w$ extends to a map $X(\Sigma_{n}) \to X(\Sigma_{w})$ of toric varieties, and that the extension equals $X(\pr_w)$. For this we need only to show that the induced map
	\[ \Hom(T, T^{\binom{n}{2} - n}) \otimes \R \to \Hom(T, T^{\binom{n}{2}-\binom{n-m}{2} - n}) \otimes \R \] 
	of real vector spaces of one-parameter subgroups, defined via composition with $\Pr_w$, induces a map of fans $\Sigma_{n} \to \Sigma_{w}$, and that the induced map agrees with the given map of fans $\pr_w$. This is the content of Lemma 3.6 in ~\cite{CHMR}, which states that the \textit{tropicalization} of the morphism $\Pr_w$ of tori agrees with $\pr_w$. The definition of tropicalization for maps of tori is simply the induced map on spaces of one-parameter subgroups. One may consult ~\cite{MStropical} for details.
\end{proof}	
\begin{proof}[Proof of Theorem \ref{SubringTheorem}.]
The maps $\iota^*$ and $\iota_w^*$ give identifications $A^*(\overline{M}_{0, n}) \cong A^*(X(\Sigma_{n}))$ and $A^*(\overline{M}_{0, w}) \cong A^*(X(\Sigma_{w}))$, respectively. In the proof of Theorem 2.17 in ~\cite{CHMR}, the authors show that when $w$ is heavy/light with at least two heavy entries, the map \[\pr_w: \Sigma_{n} \to \Sigma_{w}\] of fans contracts exactly those top-dimensional cones which parameterize combinatorial types which fail to be $w$-stable, and that $\pr_w$ is injective on all other cones. It follows that the pullback
\[X(\pr_w)^* : A^*(X(\Sigma_{w})) \to A^*(X(\Sigma_{n})) \]
is an injection (see ~\cite{cox} for a discussion of pullbacks of torus-invariant divisors under toric morphisms), identifying $A^*(X(\Sigma_{w}))$ with the subring of $A^*(X(\Sigma_{n}))$ generated by the divisors corresponding to rays not crushed by $\pr_w$. Under $\iota^*$, these are precisely the divisors indexed by $w$-stable trees on $\overline{M}_{0, n}$.
\end{proof}
	\bibliographystyle{alpha}
	\bibliography{thesis-bib}
\end{document}